\documentclass[12pt, reqno]{amsart}
\usepackage{esint,amsfonts,amsmath,amssymb,epsfig,stmaryrd,mathrsfs,bm,accents,yfonts,mathtools,graphicx,pgf}
\usepackage{a4wide}

\begingroup

\newtheorem{theorem}{Theorem}[section]
\newtheorem{lemma}[theorem]{Lemma}
\newtheorem{propos}[theorem]{Proposition}

\newtheorem{claim}{Claim}
\endgroup

\theoremstyle{definition}
\newtheorem{definition}[theorem]{Definition}

\newtheorem{remark}[theorem]{Remark}

\setcounter{tocdepth}{1}
\numberwithin{equation}{section}

\setcounter{section}{-1}

\newcommand{\eps}{{\varepsilon}}

\newcommand{\Lip}{{\text {Lip}}}

\newcommand\weaks{{\rightharpoonup^*}\,}
\newcommand\weak{{\rightharpoonup}\,}

\newcommand\res{\mathop{\hbox{\vrule height 7pt width .5pt depth 0pt
\vrule height .5pt width 6pt depth 0pt}}\nolimits}

\newcommand{\cG}{{\mathcal{G}}}

\newcommand{\cL}{{\mathcal{L}}}
\newcommand{\cH}{{\mathcal{H}}}

\newcommand{\Pe}{{\mathscr{P}}}

\def\R#1{{\mathbb R}^{#1}}

\newcommand\N{{\mathbb N}}

\newcommand{\Om}{\Omega}
\def\I#1{{\mathcal{A}}_{#1}}

\newcommand{\Iqs}{{\mathcal{A}}_Q(\R{n})}
\newcommand{\Iq}{{\mathcal{A}}_Q}
\def\a#1{\left\llbracket{#1}\right\rrbracket}
\newcommand{\abs}[1]{\left|#1\right|}
\newcommand{\norm}[2]{\left\|#1\right\|_{#2}}
\newcommand{\D}{\textup{Dir}}
\newcommand{\de}{\partial}

\newcommand{\ph}{\varphi}
\newcommand{\ra}{\right\rangle}
\newcommand{\la}{\left\langle}

\title[Lower semicontinuous $Q$-functionals]{Lower semicontinuous functionals for Almgren's multiple valued functions}
\author[De Lellis, Focardi and Spadaro]{Camillo De Lellis, Matteo Focardi and Emanuele Nunzio Spadaro}

\address{Universit\"at Z\"urich}
\email{camillo.delellis@math.uzh.ch}

\address{Universit\`a di Firenze}
\email{focardi@math.unifi.it}

\address{Hausdorff Center for Mathematics of Bonn}
\email{emanuele.spadaro@hcm.uni-bonn.de}

\begin{document}

\begin{abstract}
We consider general integral functionals on the Sobolev spaces
of multiple valued functions introduced by Almgren. 
We characterize the semicontinuous ones and recover earlier results
of Mattila in \cite{Mat} as a particular case. Moreover, we answer positively to
one of the questions raised by Mattila in the same paper.
\end{abstract}

\maketitle

\section{Introduction}\label{s:intro}
In his big regularity paper \cite{Alm}, Almgren developed a new theory of weakly differentiable multiple valued maps minimizing a suitable
generalization of the classical Dirichlet energy.
He considered maps defined on a Lipschitz domain $\Omega\subset\R{m}$ and taking 
values in the space of $Q$ unordered points of $\R{n}$,
which minimize the integral of the squared norm of the derivative 
(conveniently defined).
The regularity theory for these so called $\D$-minimizing $Q$-valued maps 
is a cornerstone in his celebrated proof that the Hausdorff dimension of the
singular set of an $m$--dimensional area-minimizing current is at most $(m-2)$.

The existence of $\D$-minimizing functions with prescribed boundary data is proven in \cite{Alm} 
via the direct method in the calculus of variations. Thus,
the generalized Dirichlet energy is semicontinuous under weak convergence.
This property is not specific of the energy considered by Almgren.
Mattila in \cite{Mat} considered some energies induced by homogeneous quadratic polynomials 
of the partial derivatives. His energies are the first non-constant
term in the Taylor expansion of elliptic geometric integrands and hence generalize
Almgren's Dirichlet functional, which is the first non-constant term in 
the expansion of the area functional.

Mattila showed that these quadratic functionals are lower semicontinuous under weak convergence.
A novelty in Mattila's work was the impossibility to use Almgren's extrinsic biLipschitz 
embeddings of the space of $Q$-points into
a Euclidean space, because of the more complicated form of the energies (cp. with \cite{Alm}
and \cite{DLSp1} for the existence and properties of these embeddings).
In this paper we push forward the investigation of Mattila and, taking advantage of the intrinsic 
metric theory for $Q$-valued functions
developed in \cite{DLSp1}, we generalize his results 
to the case of general integral functionals defined
on Sobolev spaces of $Q$-functions. We obtain a complete characterization of the semicontinuity
and a simple criterion to recognize efficiently a specific class of semicontinuous functionals.
Mattila's $Q$-semielliptic energies fall obviously into this class.
Indeed, a simple corollary of our analysis is that a quadratic energy as considered in \cite{Mat}
is $Q$-semielliptic if and only if it is quasiconvex (see Definition \ref{d:qc} and Remark \ref{r: mat}
for the relevant definitions).
Moreover, in the special cases of dimensions $m=2$ or $n=2$, we can answer positively to the question posed by Mattila himself
on the equivalence of $Q$-semiellipticity and $1$-semiellipticity.

\subsection{Quasiconvexity and lower semicontinuity}
In order to illustrate the results, we introduce the following terminology
(we refer to \cite{DLSp1} and Subsection~\ref{ss:q} for the relevant definitions and terminology concerning
$Q$--valued maps).

Let $\Omega\subset\R{m}$ be a bounded open set.
A measurable map $f:\Omega\times \left( \R{n}\right)^Q \times \left(\R{m\times n}\right)^Q\to \R{}$
is called a {\em $Q$-integrand} if, for every permutation $\pi$ of $\{1,\ldots,Q\}$,
\[
f (x,a_1, \ldots, a_Q, A_1, \ldots, A_Q) = f (x,a_{\pi (1)}, \ldots, a_{\pi (Q)}, A_{\pi(1)}, \ldots, A_{\pi (Q)}).
\]
Note that, by \eqref{e:first order approx} (see also \cite[Remark 1.11]{DLSp1}), given a weakly differentiable $Q$-valued map $u$, 
the expression $f(\cdot,u,Du)=f(\cdot,u_1,\ldots,u_Q,Du_1,\ldots,Du_Q)$ 
is well defined almost everywhere in $\Omega$.
Thus, for any Sobolev $Q$-valued function the following energy makes sense:
\begin{equation}\label{e:F}
F(u)=\int_\Omega f\big(x,u(x),Du(x)\big)dx.
\end{equation}

Our characterization of (weakly) lower-semicontinuous functionals $F$ 
is the counterpart of Morrey's celebrated 
result in the vectorial calculus of the variations (see \cite{MorPaper},
\cite{MorBook}).
We start by introducing the relevant notion of quasiconvexity, which is
a suitable generalization of Morrey's definition. From now on we set $C_r:=[-r/2,r/2]^m$.

\begin{definition}[Quasiconvexity]\label{d:qc}
Let $f: \left(\R{n}\right)^Q\times\left(\R{m\times n}\right)^Q \to \R{}$ be a locally bounded $Q$-integrand.
We say that $f$ is \textit{quasiconvex} if the following holds
for every affine $Q$-valued function
$u (x)= \sum_{j=1}^J q_j \a{a_j + L_j \cdot x}$, with $a_i\neq a_j$ for $i\neq j$.
Given any collection of maps $w^j\in W^{1,\infty} (C_1,\I{q_j})$
with $w^j|_{\de C_1} = q_j \a{a_j +L_j|_{\de C_1}}$ we have the inequality
\begin{equation}\label{e:qc}
f\big(u(0),Du(0)\big)\leq \int_{C_1} f\big(\underbrace{a_1, \ldots, a_1}_{q_1},
\ldots,\underbrace{a_J, \ldots, a_J}_{q_J},Dw^1(x), \ldots, Dw^J(x)\big)dx.
\end{equation}
\end{definition}

The main result is the following.

\begin{theorem}\label{t:qc}
Let $p\in [1, \infty[$ and $f:\Omega\times \left( \R{n}\right)^Q \times 
\left(\R{m\times n}\right)^Q\to \R{}$ be a continuous $Q$-integrand.
If $f (x, \cdot, \cdot)$ is quasiconvex for every $x\in \Omega$ and
\[
0 \leq f (x, a, A)\leq C (1+ |a|^q+|A|^p) \qquad \mbox{for some constant $C$},
\]
where $q=0$ if $p>m$, $q=p^*$ if $p<m$ and $q\geq 1$ is any exponent if $p=m$,
then the functional $F$ in \eqref{e:F} is weakly lower semicontinuous in 
$W^{1,p} (\Omega, \Iqs)$.
Conversely, if $F$ is weakly$^*$ lower semicontinuous in $W^{1,\infty} (\Omega, \Iqs)$,
then $f(x, \cdot, \cdot)$ is quasiconvex for every $x\in \Omega$.
\end{theorem}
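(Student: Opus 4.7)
My plan is to transplant the classical Acerbi--Fusco--Morrey proof in the vectorial calculus of variations to the $Q$-valued setting, exploiting the intrinsic metric and Sobolev theory of \cite{DLSp1} in place of the Euclidean structure. The necessity direction will be proved by testing weak$^*$ lower semicontinuity against highly oscillating sequences that converge weakly to an affine function; the sufficiency proceeds by localization, blow-up onto affine tangents, and a $Q$-valued Lipschitz truncation that reduces the problem to the affine case.

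For the necessity, fix $x_0\in\Omega$ (assume $x_0=0$ by translation) and an affine $u(x)=\sum_{j=1}^J q_j\a{a_j+L_j\cdot x}$ with $a_j$ pairwise distinct, together with admissible test maps $w^j$. I first extend each $w^j$ to a map $\tilde w^j\colon\R{m}\to\I{q_j}$ that is $C_1$-periodic modulo the linear correction $L_j z$ on each translated cube $C_1+z$: the boundary condition $w^j|_{\de C_1}=q_j\a{a_j+L_j\cdot y}$ guarantees that adjacent cubes glue into a well-defined Lipschitz $q_j$-valued map with $C_1$-periodic gradient. The rescaled sequence
\[
u_\ell(x)=\sum_{j=1}^J\sum_{i=1}^{q_j}\a{(1-\ell^{-1})a_j+\ell^{-1}\big(\tilde w^j(\ell x)\big)_i},
\]
which is intrinsically defined (being symmetric in the sheets of $\tilde w^j$), converges uniformly to $u$ on any $C_\rho\Subset\Omega$ and has $Du_\ell(x)=D\tilde w^j(\ell x)$ slot-wise; hence $u_\ell\weaks u$ in $W^{1,\infty}(C_\rho,\Iqs)$. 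After interpolating $u_\ell$ to $u$ on $\Omega\setminus C_\rho$ via the Lipschitz extension in \cite{DLSp1}, the weak$^*$ lower semicontinuity of $F$, combined with a Riemann--Lebesgue argument applied to the periodic integrand cube by cube and with the continuity of $f$ in $(x,a)$, yields the quasiconvexity inequality at $x_0$ in the limit $\rho\downarrow 0$.

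For the sufficiency, let $u_k\weak u$ in $W^{1,p}(\Omega,\Iqs)$. Rellich compactness gives $u_k\to u$ in $L^p$ and almost everywhere, and up to subsequences $|Du_k|^p\weaks\mu$ for a finite Radon measure $\mu$. At a.e.\ $x_0\in\Omega$ (Lebesgue point of $d\mu/dx$ at which $u$ is approximately affinely differentiable), $u(x_0)=\sum_j q_j\a{b_j}$ splits into $J$ distinct clusters and the affine tangent is $u_{x_0}(x)=\sum_j q_j\a{b_j+L_j(x_0)\cdot(x-x_0)}$. For $k$ large and for $x$ in most of a small cube $C_r(x_0)$, the values of $u_k(x)$ break into the same $J$ clusters, and \cite{DLSp1} provides a decomposition $u_k=\sum_j u_k^j$ with $u_k^j\in W^{1,p}(C_r(x_0),\I{q_j})$ on a set of large measure. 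A sheet-by-sheet Lipschitz truncation of each $u_k^j$ together with a boundary interpolation to the affine datum $q_j\a{b_j+L_j(x_0)\cdot(\cdot-x_0)}$ produces competitors admissible in the definition of quasiconvexity at $(u(x_0),Du(x_0))$; applying the quasiconvexity hypothesis on each cube, summing over a Vitali covering of $\Omega$, and then letting $k\to\infty$ and $r\downarrow 0$, gives $F(u)\le\liminf_k F(u_k)$.

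The main difficulty is the last step: constructing the competitors with the prescribed affine boundary values while controlling the error in $F$. This requires (i) $p$-equi-integrability of $|Du_k|^p$ after a De\ Giorgi--Chacon-type truncation, which in our setting must be obtained via the intrinsic $Q$-valued Lipschitz approximation of \cite{DLSp1}---the extrinsic Almgren embedding is unavailable because the integrand $f$ is completely general, as emphasized by Mattila in \cite{Mat}---and (ii) sharp control on the boundary layer where the competitor is interpolated to the affine datum, achieved through the $p$-growth and the Sobolev embedding, which forces the choice $q=p^*$ for $p<m$. The borderline $p=m$ case is handled by a standard approximation argument based on higher integrability of $u$.
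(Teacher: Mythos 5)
Your necessity argument is essentially the paper's own: subtract the affine part, periodize, rescale in $C_r\Subset\Omega$, use weak$^*$ lower semicontinuity there, and remove the $(x,a)$-dependence by continuity as $r\downarrow 0$; the only difference is that you patch $u_\ell$ to $u$ across $\de C_\rho$ by interpolation instead of choosing integer frequencies so that the boundary data match exactly, which is harmless. The sufficiency half follows the same general strategy as the paper (intrinsic Lipschitz truncation for equi-integrability, then blow-up and quasiconvexity at good points), but it has a genuine gap exactly at the step that is specific to $Q$-valued maps, namely the sentence ``the values of $u_k(x)$ break into the same $J$ clusters, and \cite{DLSp1} provides a decomposition $u_k=\sum_j u_k^j$ \dots on a set of large measure.''

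As stated this step does not work, for two reasons. First, a cluster decomposition that holds only on a subset cannot be fed sheet-by-sheet into a Lipschitz truncation and a boundary interpolation: the competitors in Definition \ref{d:qc} must split into $J$ separated $\I{q_j}$-valued pieces \emph{everywhere} on the cube, because blow-ups (translations by $a_j$ and dilations of the values) are only defined for full-multiplicity points. The paper's device, which your plan lacks, is to replace $u_k$ by $w_k=\vartheta_k\circ u_k$, where $\vartheta_k$ is the $1$-Lipschitz retraction of \cite[Lemma 3.7]{DLSp1} onto $\overline B_{r_k}(u(x_0))\subset\Iq$ with $r_k=2|Du|(x_0)\rho_k$; this forces the global splitting and fixes $T_{x_0}u$. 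Second, ``large measure'' must be quantified at the blow-up scale: one needs $\cL^m(A_k)=o(\rho_k^m)$ for $A_k=\{w_k\neq u_k\}$ and then must show that $\rho_k^{-m}\int_{A_k}\bigl(1+|u_k|^q+|Du_k|^p\bigr)\to 0$. Global equi-integrability on $\Omega$ gives nothing here because of the $\rho_k^{-m}$ normalization; the paper needs the averaged statement (Lemma \ref{l:equi mean}), applied after fixing a superlinear $\ph$ with $\sup_k\fint_{C_{\rho_k}(x_0)}\bigl(\ph(|u_k|^q)+\ph(|Du_k|^p)\bigr)<\infty$, together with the Chebyshev estimate for $\cL^m(A_k)$, which in turn rests on the $L^p$-approximate differentiability $\fint_{C_{\rho_k}(x_0)}\cG^p(u_k,T_{x_0}u)=o(\rho_k^p)$. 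The latter is not the pointwise approximate differentiability you invoke: it is the Calder\'on--Zygmund-type refinement proved in Lemma \ref{l:caldzyg}, combined with a diagonal choice of radii $\rho_k$ along which the limit measures do not charge $\de C_{\rho_k}(x_0)$. Related to this, your order of limits (quasiconvexity on cubes of fixed radius $r$, sum over a Vitali covering, then $k\to\infty$, then $r\downarrow 0$) leaves the boundary-layer error uncontrolled: the interpolation to the affine datum costs a term of the form $\eps^{-1}\fint_{\de C_r}\cG^p(\cdot,T_{x_0}u)$, which is only small when $k$ and $r$ are coupled as in the blow-up with adapted radii (the Fonseca--M\"uller localization $f(x,u_k,Du_k)\cL^m\weaks\mu$ and the density inequality $f(x_0,u(x_0),Du(x_0))\le\frac{d\mu}{d\cL^m}(x_0)$). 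Until these three ingredients (retraction to achieve everywhere-splitting, $o(\rho_k^m)$ control of the bad set with averaged equi-integrability, and the $L^p$-differentiability lemma) are supplied, the sufficiency proof is incomplete.
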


\begin{remark}\label{r: mat2}
It is easy to see that a quadratic integrand is $Q$-semielliptic in the sense of Mattila if and only if it is
quasiconvex, cp.~to Remark \ref{r: mat}.
\end{remark}

\subsection{Polyconvexity} We continue to follow the classical path of the vectorial
calculus of variations and introduce 
a suitable generalization of the well-known notion of polyconvexity (see \cite{MorBook},
\cite{Ball}).
Let $N:=\min \{m,n\}$, $\tau(n,m):=\sum_{k=1}^N\binom{m}{k}\binom{n}{k}$ and 
define $M:\R{n\times m}\to\R{\tau(m,n)}$ as
$M(A):=\big(A,\mathrm{adj}_2A,\ldots,\mathrm{adj}_NA\big)$,
where $\mathrm{adj}_kA$ stands for 
the matrix of all $k\times k$ minors of $A$.

\begin{definition}\label{d:pc}
A $Q$-integrand $f:\left(\R{n}\right)^Q\times \left(\R{n\times m}\right)^Q \to \R{}$ is \textit{polyconvex} if there exists a map
$g:\left(\R{n}\right)^Q\times \left(\R{\tau(m,n)}\right)^Q\to \R{}$ such that: 
\begin{itemize}
\item[(i)] the function $g(a_1, \ldots, a_Q,\cdot):\left(\R{\tau(m,n)}\right)^Q\to \R{}$ is convex for every $a_1, \ldots, a_Q \in \R{n}$,
\item[(ii)] for every $a_1, \ldots, a_Q \in \R{n}$ and $(L_1,\ldots,L_Q)\in(\R{n\times m})^Q$ it holds
\begin{equation}\label{e:char-cvx}
f\big(a_1, \ldots, a_Q,L_1,\ldots, L_Q\big)=g\big(a_1, \ldots, a_Q,M(L_1),\ldots, M(L_Q)\big).
\end{equation}
\end{itemize}
\end{definition}

Polyconvexity is much easier to verify.
For instance, if $\min\{m,n\}\leq 2$, quadratic integrands are polyconvex if and only if they are $1$-semielliptic
in the sense of Mattila, cp.~to Remark \ref{r: mat3}.
Combining this with Remark \ref{r: mat2} and Theorem \ref{t:pc=>qc}, we easily
conclude that $Q$-semiellipticity and $1$-semiellipticity coincide in this case, as suggested
by Mattila himself in \cite{Mat}.

\begin{theorem}\label{t:pc=>qc}
Every locally bounded polyconvex $Q$-integrand $f$ is $Q$-quasiconvex.
\end{theorem}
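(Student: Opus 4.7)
The plan is to adapt Morrey and Ball's classical polyconvexity-implies-quasiconvexity argument to the $Q$-valued setting. Fix an affine $Q$-valued map $u(x)=\sum_{j=1}^J q_j\a{a_j+L_j\cdot x}$ and admissible competitors $w^j\in W^{1,\infty}(C_1,\I{q_j})$ with $w^j|_{\partial C_1}=q_j\a{a_j+L_j|_{\partial C_1}}$; pick measurable selections $w^j(x)=\sum_{i=1}^{q_j}\a{w^j_i(x)}$ and let $g$ be a convex representative of $f$ as in Definition~\ref{d:pc}. Within each of the $J$ blocks the $a$-arguments coincide, so the full permutation symmetry of $f$ reduces to symmetry of $g(a_1^{q_1},\ldots,a_J^{q_J},\cdot)$ under permutations within each block; after averaging $g$ over such block-preserving permutations we may assume this block-symmetry (convexity in the minor-variables is preserved).

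The central ingredient is a null-Lagrangian identity: for every $j$ and every $1\le k\le\min\{m,n\}$,
\begin{equation*}
\int_{C_1}\sum_{i=1}^{q_j}\mathrm{adj}_k\bigl(Dw^j_i(x)\bigr)\,dx=q_j\,\mathrm{adj}_k(L_j),
\end{equation*}
equivalently $\int_{C_1}\sum_i M(Dw^j_i)=q_j\,M(L_j)$. This is the $Q$-valued analogue of the classical fact that the $k$-minors of the gradient are determined by the boundary trace. The cleanest route is to view $w^j$ as a rectifiable $m$-current $\mathbf{G}_{w^j}$ supported on its graph in $C_1\times\R{n}$: the affine boundary datum forces $\partial\mathbf{G}_{w^j}=\partial(q_j\,\mathbf{G}_{a_j+L_j\cdot x})$, so pairing each current with the closed $m$-forms on $C_1\times\R{n}$ whose pullbacks along the graph compute the $k$-minors yields the identity. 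An alternative is to approximate $w^j$ in $W^{1,p}$ by piecewise-affine $Q$-valued competitors (locally single-valued outside a small branching set), for which the identity is elementary, and pass to the limit.

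With this in hand the proof is two inequalities. Since $|C_1|=1$, Jensen's inequality applied to the convex function $\xi\mapsto g(a_1^{q_1},\ldots,a_J^{q_J},\xi)$ gives
\begin{equation*}
\int_{C_1} f(a_1^{q_1},\ldots,a_J^{q_J},Dw^1,\ldots,Dw^J)\,dx\ge g\bigl(a_1^{q_1},\ldots,a_J^{q_J},\bar\xi^1_1,\ldots,\bar\xi^J_{q_J}\bigr),
\end{equation*}
where $\bar\xi^j_i:=\int_{C_1} M(Dw^j_i)$. Next, for each $j$ the diagonal input $(\bar\eta^j,\ldots,\bar\eta^j)$ with $\bar\eta^j:=q_j^{-1}\sum_i\bar\xi^j_i$ equals the average over permutations within the $j$-th block of $(\bar\xi^j_1,\ldots,\bar\xi^j_{q_j})$; block-symmetry and convexity of $g$ then yield $g(\ldots,\bar\xi^j_1,\ldots,\bar\xi^j_{q_j},\ldots)\ge g(\ldots,\bar\eta^j,\ldots,\bar\eta^j,\ldots)$, and the null-Lagrangian identity gives $\bar\eta^j=M(L_j)$. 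Chaining these $J$ block-symmetrizations with the Jensen bound produces
\begin{equation*}
\int_{C_1} f\,dx\ge g\bigl(a_1^{q_1},\ldots,a_J^{q_J},M(L_1)^{q_1},\ldots,M(L_J)^{q_J}\bigr)=f(u(0),Du(0)),
\end{equation*}
which is \eqref{e:qc}. The main obstacle is the null-Lagrangian identity itself: individual sheets $w^j_i$ are only globally defined up to permutation and may branch in the interior, so a sheet-by-sheet divergence-theorem argument fails and one must either pass through the graph-current formulation or through a careful piecewise-affine approximation; the remaining Jensen-plus-symmetrization steps are then essentially bookkeeping.
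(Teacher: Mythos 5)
Your proof is correct, and its crux coincides with the paper's: the null-Lagrangian identity $\int_{C_1}\sum_i M(Dw^j_i)=q_j\,M(L_j)$ is exactly what the paper establishes (in the equivalent form \eqref{e:scpoly}, phrased for polyaffine functions $P_j$) by pairing the graph current $T_{w^j,C_1}$ with exact constant-coefficient $m$-forms and invoking $\de T_{w^j,C_1}=T_{w^j,\de C_1}$ (Theorem \ref{t:de Tf}), so that equal boundary traces force equal boundary currents. Where you genuinely diverge is the convex-analysis bookkeeping: the paper first proves Proposition \ref{p:chrctrztn}, extracting supporting polyaffine functions $P_j$ with the symmetry constraint $P_i=P_j$ whenever $a_i=a_j$ (a subgradient of $g$ chosen compatibly with the permutation invariance of $f$), and concludes via \eqref{e:touch}--\eqref{e:above}; you instead work with $g$ directly, average it over block-preserving permutations (legitimate: the average is still convex and still represents $f$ at the fixed $a$-tuple, because such permutations fix the $a$'s), and then apply Jensen followed by a within-block symmetrization to land on $g\big(a_1^{q_1},\ldots,a_J^{q_J},M(L_1)^{q_1},\ldots,M(L_J)^{q_J}\big)=f(u(0),Du(0))$. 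The two devices are equivalent in strength—your averaging of $g$ plays the role of the paper's symmetric choice of subgradient—and your route is arguably a bit shorter since it bypasses Proposition \ref{p:chrctrztn}, at the price of not yielding the reusable polyaffine characterization of polyconvex $Q$-integrands. One caveat: your offhand alternative to the current-theoretic step (piecewise-affine approximation of the $Q$-valued competitors away from a small branching set) is unsubstantiated and should not be relied upon—the paper explicitly states that the authors do not know how to prove the theorem without currents—but since your primary argument goes through the graph-current formulation and Theorem \ref{t:de Tf}, the proof stands.
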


For integrands on single valued maps,
the classical proof of Theorem \ref{t:pc=>qc}
relies on suitable integration by parts formulas, called Piola's identities
by some authors. These identities can be shown by direct computation. However, an elegant
way to derive them is to rewrite the quantities involved as integrals of suitable
differential forms over the graph of the given map. 
The integration by parts is then explained via
Stokes' Theorem. This point of view is the starting of the theory of
Cartesian currents developed by Giaquinta, Modica and Sou\v{c}ek (see the
monograph \cite{GMS1,GMS2}). Here we take this approach to derive 
similar identities in the case of $Q$-valued maps, building on the obvious 
structure of current induced by the graph
of Lipschitz $Q$-valued maps $f: \Omega\to
\Iqs$ (which we denote by ${\rm gr}\, (f)$).
A key role is played by the intuitive identity $\partial\, {\rm gr}\, (f) = 
{\rm gr} \left(f|_{\partial \Omega}\right)$, which for $Q$-valued maps is less
obvious. A rather lengthy proof of this fact was given for the first time
in \cite{Alm}. We refer to Appendix C of \cite{DLSp2} for a much shorter derivation.
A final comment is in order. Due to the combinatorial complexity of $Q$-valued maps,
we do not know whether Theorem \ref{t:pc=>qc} can be proved without using the theory
of currents. 

The paper is organized in three sections. The first one contains 
three technical lemmas on $Q$-valued Sobolev functions, proved using
the language of \cite{DLSp1} (which differs slightly from Almgren's original
one).
In Section \ref{s:quasi} we prove Theorem \ref{t:qc} and in 
Section \ref{s:poly} Theorem \ref{t:pc=>qc}.
In the appendix we collect some results on equi-integrable functions,
essentially small variants of Chacon's biting lemma, which have already appeared
in the literature: we include their proofs for
reader's convenience.

\section{$Q$-valued functions}\label{s:prel}
In this section we recall the notation and terminology of \cite{DLSp1},
and provide some preliminary results which will be 
used in the proofs of Theorem \ref{t:qc} and Theorem \ref{t:pc=>qc}.

\subsection{Sobolev $Q$-valued functions}\label{ss:q}
$Q$-valued functions are maps valued in the complete metric space of unordered sets of $Q$ points in $\R{n}$.

\begin{definition}\label{d:IQ}  
We denote by $(\Iqs, \cG)$ the metric space of unordered $Q$-tuples given by
\begin{equation*}
\Iqs :=\left\{\sum_{i=1}^Q\a{P_i}\,:\,P_i\in\R{n}\;\textrm{for every  }i=1,\ldots,Q\right\},
\end{equation*}
where $\a{P_i}$ denotes the Dirac mass in $P_i\in \R{n}$ and
\begin{equation*}
\cG(T_1,T_2)\;:=\;\min_{\sigma\in\Pe_Q}\sqrt{\sum_i\abs{P_i-S_{\sigma(i)}}^2},
\end{equation*}
with $T_1=\sum_i\a{P_i}$ and $T_2=\sum_i\a{S_i}\in \Iqs$, and $\Pe_Q$ denotes the group of permutations of $\left\{1,\ldots,Q\right\}$.
\end{definition}

Given a vector $v\in\R{n}$, we denote by $\tau_v(T)$ the translation of the $Q$-point $T=\sum_i\a{T_i}$ under $v$ given by
\begin{equation}\label{e:translation}
\tau_v(T):=\sum_i\a{T_i-v}.
\end{equation}
Continuous, Lipschitz, H\"older and (Lebesgue) measurable 
functions from $\Omega$ into $\Iq$ are defined in the usual way.
It is a general fact that any measurable $Q$-valued function $u: \Omega\to \Iq$ can be written as the ``sum'' of $Q$ measurable functions $u_1,\ldots,u_Q$ \cite[Proposition~0.4]{DLSp1}:
\[
u(x)=\sum_i \a{u_i(x)}\quad\text{for a.e. }x\in \Omega.
\]

We now recall the definition of the Sobolev spaces of functions taking values in the metric space of $Q$-points.

\begin{definition}\label{d:W1p}
A measurable $u:\Omega\rightarrow\Iq$ is in the Sobolev class
$W^{1,p}$ ($1\leq p\leq\infty$) if there exists $\varphi\in L^p(\Omega;[0,+\infty))$ such that 
\begin{itemize}
\item[(i)] $x\mapsto\cG (u(x),T)\in W^{1,p}(\Omega)$ for all $T\in \Iq$;
\item[(ii)] $\abs{D\, \cG (u, T)}\leq\varphi$ a.e. in $\Omega$ for all $T\in \Iq$.
\end{itemize}
\end{definition}
As for classical Sobolev maps, an important feature of Sobolev $Q$-valued functions is the existence of the approximate differential almost everywhere.
Given $u\in W^{1,p}(\Omega,\Iqs)$, there exists a $Q$ map $Du=\sum_i\a{Du_i}:\Omega\to \Iq(\R{m\times n})$ such that, for almost every $x_0\in \Omega$, the first order approximation
\begin{equation}\label{e:first order approx}
T_{x_0} u(x)\;:=\;\sum_i\a{Du_i(x_0)\cdot(x-x_0)+u_i(x_0)}
\end{equation}
satisfies the following:
\begin{itemize}
\item[(i)] there exists a set $\tilde \Omega$ with density one at $x_0$ such that $\cG(u(x),T_{x_0} u)=o(\abs{x-x_0})$ as $x\to x_0$, $x\in\tilde\Omega$;
\item[(ii)] $Du_i(x_0)=Du_j(x_0)$ if $u_i(x_0)=u_j(x_0)$.
\end{itemize}
Moreover, the map $Du$ is $L^p$ integrable, meaning that
\[
|Du|:=\sqrt{\sum_i |Du_i|^2}\in L^p(\Omega).
\]
Finally, we recall the definition of weak convergence in $W^{1,p}(\Omega,\Iqs)$.

\begin{definition}\label{d:weak convergence}
Let $u_k, u\in W^{1,p}(\Om;\Iq)$. We say that $u_k$ converges weakly 
to $u$ for $k \to \infty$, (and we write $u_k\rightharpoonup u$) in $W^{1,p}(\Om;\Iq)$,
if
\begin{itemize}
\item[(i)] $\int\cG(f_k,f)^p\to0$, for $k\to\infty$;
\item[(ii)] $\sup_{k}\int |Df_k|^p<\infty$.
\end{itemize}
\end{definition}

\subsection{$L^p$-approximate differentiability}
Here we prove a more refined differentiability result.
\begin{lemma}\label{l:caldzyg}
Let $u\in W^{1,p}(\Omega,\Iq)$. Then, for $\cL^m$-a.e.~$x_0\in\Omega$ it holds
\begin{equation}\label{e:caldzyg}
\lim_{\rho\to0}\rho^{-p-m}\int_{C_\rho(x_0)}\cG^p(u,T_{x_0}u)=0.
\end{equation}
\end{lemma}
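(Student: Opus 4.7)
The plan is to reduce the statement to the classical Calder\'on--Zygmund $L^p$-differentiability theorem for single-valued Sobolev maps by means of Almgren's biLipschitz embedding $\xi\colon\Iqs\to\R{N}$ (as recalled in \cite{DLSp1}). Setting $v:=\xi\circ u$, one has $v\in W^{1,p}(\Omega,\R{N})$, and the classical theorem furnishes a full-measure subset of $\Omega$ on which
\[
\lim_{\rho\to0}\rho^{-p-m}\int_{C_\rho(x_0)}\abs{v(x)-v(x_0)-Dv(x_0)(x-x_0)}^p\,dx=0.
\]
I would intersect this with the set of approximate differentiability points of $u$ (property (i) stated just before Definition \ref{d:weak convergence}) to get another full-measure subset of $\Omega$, and fix $x_0$ therein.

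Next, introduce the auxiliary Lipschitz map $\tilde v(x):=\xi\bigl(T_{x_0}u(x)\bigr)$, whose Lipschitz constant is bounded by $\Lip(\xi)\,\abs{Du(x_0)}$ since $T_{x_0}u$ is affine with slope $Du(x_0)$, and the affine comparison $L(x):=v(x_0)+Dv(x_0)(x-x_0)$. By the biLipschitz property of $\xi$,
\[
\cG\bigl(u(x),T_{x_0}u(x)\bigr)\leq C\,\abs{v(x)-\tilde v(x)}\leq C\bigl(\abs{v(x)-L(x)}+\abs{\tilde v(x)-L(x)}\bigr),
\]
and the first summand is controlled by the Calder\'on--Zygmund step. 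The crux is therefore to show
\[
\lim_{\rho\to0}\rho^{-p-m}\int_{C_\rho(x_0)}\abs{\tilde v(x)-L(x)}^p\,dx=0.
\]

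To handle this, I would split $C_\rho(x_0)=(\tilde\Omega\cap C_\rho)\cup(C_\rho\setminus\tilde\Omega)$, where $\tilde\Omega$ is the density-one set on which $\cG(u,T_{x_0}u)=o(\abs{x-x_0})$ as $x\to x_0$. On the good piece, $\abs{\tilde v-v}\leq C\,\cG(u,T_{x_0}u)=o(\abs{x-x_0})$, so combining with the Calder\'on--Zygmund bound on $\abs{v-L}$ gives a contribution $\leq C\eps^p\rho^{p+m}+o(\rho^{p+m})$ for any $\eps>0$, negligible in the limit by arbitrariness of $\eps$. On the bad piece, whose Lebesgue measure is $o(\rho^m)$ by the density-one property, the Lipschitz estimates yield $\abs{\tilde v(x)-L(x)}\leq(\Lip(\xi)\abs{Du(x_0)}+\abs{Dv(x_0)})\abs{x-x_0}\leq C\rho$ on $C_\rho$, so the contribution is at most $C^p\rho^p\cdot o(\rho^m)=o(\rho^{p+m})$. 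Adding the two pieces finishes the proof.

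The main subtlety I would anticipate is conceptual rather than technical: $\xi$ is not classically differentiable at the possibly singular multi-point $u(x_0)$, so one cannot simply differentiate $\xi\circ T_{x_0}u$ at $x_0$. The strategy above circumvents this by transferring the linearization $L$ from $v=\xi\circ u$ to $\tilde v=\xi\circ T_{x_0}u$ via the density-one asymptotic agreement of $v$ and $\tilde v$ at $x_0$, invoking only the global biLipschitz estimates of $\xi$.
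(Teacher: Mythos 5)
Your argument is correct, but it takes a genuinely different route from the paper's. You go extrinsic: you embed via Almgren's biLipschitz map $\xi$, apply the classical Calder\'on--Zygmund $L^p$-differentiability theorem to $v=\xi\circ u$ (legitimate, since $u\in W^{1,p}(\Omega,\Iq)$ implies $\xi\circ u\in W^{1,p}(\Omega,\R{N})$ by the equivalence of definitions in \cite{DLSp1}), and then transfer the linearization $L$ of $v$ to $\tilde v=\xi\circ T_{x_0}u$ by splitting $C_\rho(x_0)$ into the density-one set where $\cG(u,T_{x_0}u)=o(\abs{x-x_0})$ and its complement of measure $o(\rho^m)$, on which only the Lipschitz bounds for $\tilde v$ and $L$ are used; this cleanly sidesteps the non-differentiability of $\xi$ at multiplicity points, which is indeed the one genuine subtlety, and the exponent bookkeeping closes correctly. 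The paper instead argues intrinsically: it takes the Lipschitz approximations $u_\lambda$ of \cite[Proposition 4.4]{DLSp1} with coincidence sets $\Omega_\lambda$ on which $T_xu=T_xu_\lambda$, works at Lebesgue points of $\chi_{\Omega_\lambda}$ and of $\abs{Du}^p\chi_{\Omega\setminus\Omega_\lambda}$, handles $\fint_{C_\rho(x_0)}\cG^p(u_\lambda,T_{x_0}u_\lambda)$ by the $Q$-valued Rademacher theorem, and controls $\fint_{C_\rho(x_0)}\cG^p(u,u_\lambda)$ by a Poincar\'e inequality for the scalar Sobolev function $\cG(u,u_\lambda)$, which vanishes on $\Omega_\lambda$. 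What each approach buys: yours is shorter and reduces to classical Euclidean results, at the price of invoking the extrinsic embedding, which this paper deliberately avoids in keeping with the intrinsic metric theory of \cite{DLSp1}; the paper's proof stays intrinsic and reuses the truncation/coincidence-set technique that reappears in Lemma \ref{l:equiint}. One small point you should make explicit: pick $x_0$ also so that the Lebesgue value $v(x_0)$ equals $\xi(u(x_0))$ (automatic $\cL^m$-a.e.\ by uniqueness of approximate limits), so that the comparison between $L$ and $\tilde v$ at $x_0$ is consistent.
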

\begin{proof}
By the Lipschitz approximation in \cite[Proposition 4.4]{DLSp1}, there exists a family of functions $(u_\lambda)$ such that:
\begin{itemize}
 \item[(a)] $\Lip(u_\lambda)\leq \lambda$ and $d_{W^{1,p}}(u,u_\lambda)=o(1)$ as $\lambda\to+\infty$;
 \item[(b)] the sets $\Omega_\lambda=\{x: T_{x}u=T_{x}u_\lambda\}$ satisfy $\Omega_\lambda\subset \Omega_{\lambda'}$
 for $\lambda<\lambda'$ and $\cL^m(\Omega\setminus \Omega_\lambda)=o(1)$ as $\lambda\to+\infty$.
\end{itemize}
We prove \eqref{e:caldzyg} for the points $x_0\in \Omega_\lambda$ which are Lebesgue points for $\chi_{\Omega_\lambda}$ and
$|Du|^p  \chi_{\Omega\setminus \Omega_\lambda}$, for some $\lambda\in\N$, that is
\begin{equation}\label{e:lebesgue}
\lim_{\rho\to0}\fint_{C_\rho(x_0)}\chi_{\Omega_\lambda}=1
\quad\text{and}\quad
\lim_{\rho\to0}\fint_{C_\rho(x_0)}|Du|^p\chi_{\Omega\setminus \Omega_\lambda}=0.
\end{equation}
Let, indeed, $x_0$ be a point as in \eqref{e:lebesgue} for a fixed $\Omega_\lambda$. Then,
\begin{align}\label{e:caldzyg2}
\fint_{C_\rho(x_0)}\cG^p(u,T_{x_0}u)&\leq
2^{p-1}\fint_{C_\rho(x_0)}\cG^p(u_\lambda,T_{x_0}u_\lambda)+
2^{p-1}\fint_{C_\rho(x_0)}\cG^p(u_\lambda,u)\notag\\
&\leq o(\rho^p)+
C\rho^{p-m}\int_{C_\rho(x_0)\setminus \Omega_\lambda}|D(\cG(u_\lambda,u))|^p,
\end{align}
where in the latter inequality we used Rademacher's theorem for $Q$-functions (see \cite[Theorem 1.13]{DLSp1}) 
and a Poincar\'e inequality for the classical Sobolev function $\cG(u,u_\lambda)$ which by \eqref{e:lebesgue} satisfies
$$
\Omega_\lambda\subseteq \big\{\cG(u,u_\lambda)=0\big\}
\quad\text{and}\quad
\rho^{-m}\cL^m(C_\rho(x_0)\cap \Omega_\lambda)\geq 1/2\quad\text{for small }\;\rho.
$$
Since $\cG(u,u_\lambda)=\sup_{T_i}|\cG(u,T_i)-\cG(T_i,u_\lambda)|$ and
$$
D|\cG(u,T_i)-\cG(T_i,u_\lambda)|\leq |D\cG(u,T_i)|+|D\cG(T_i,u_\lambda)|\leq |Du|+|Du_\lambda|\quad
\cL^m\text{-a.e. on }\;\Omega,
$$
we conclude (recall that $\lambda\leq C|Du|$ on  $\Omega\setminus \Omega_\lambda$)
\begin{align*}
\rho^{p-m}\int_{C_\rho(x_0)\setminus \Omega_\lambda}|D(\cG(u,u_\lambda))|^p&\leq 
\rho^{p-m}\int_{C_\rho(x_0)\setminus \Omega_\lambda}\sup_i \big(D|\cG(u,T_i)-\cG(T_i,u_\lambda)|\big)^p\\
&\leq C\rho^{p-m}\int_{C_\rho(x_0)\setminus \Omega_\lambda}
|Du|^p\stackrel{\eqref{e:lebesgue}}{=}o(\rho^p),
\end{align*}
which finishes the proof.
\end{proof}

\subsection{Equi-integrability}
In the first lemma we show how a weakly convergent sequence of $Q$-functions can be truncated in order to obtain
an equi-integrable sequence still weakly converging to the same limit.
This result is the analog of \cite[Lemma 2.3]{FMP} for $Q$-valued functions and constitute 
a main point in the proof of the sufficiency of quasiconvexity for the lower semicontinuity.
Details on equi-integrability can be found in the Appendix.

\begin{lemma}\label{l:equiint}
Let $(v_k)\subset W^{1,p}(\Omega,\Iq)$ be weakly converging to $u$.
Then, there exists a subsequence $(v_{k_j})$ and a sequence $(u_j)\subset
W^{1,\infty}(\Omega,\Iq)$ such that
\begin{itemize}
\item[(i)] $\cL^m(\{v_{k_j}\neq u_j\})=o(1)$ and $u_j\weak u$ in
$W^{1,p}(\Omega,\Iq)$;
\item[(ii)] $(|Du_j|^p)$ is equi-integrable;
\item[(iii)] if $p\in[1,m)$, $(|u_j|^{p^*})$ is equi-integrable and,
if $p=m$, $(|u_j|^q)$ is equi-integrable for any $q\geq 1$.
\end{itemize}
\end{lemma}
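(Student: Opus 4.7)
The strategy is to reproduce in the $Q$-valued framework the classical truncation argument of Fonseca--M\"uller--Pedregal, relying on two ingredients whose scalar counterparts are standard: Chacon's biting lemma (treated in the Appendix) and the $Q$-valued Lipschitz approximation of \cite[Proposition 4.4]{DLSp1}. Both have direct analogues on the metric side, so the main work is in combining them correctly.

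\textbf{Step 1: extract an almost equi-integrable subsequence.} The sequence $(|Dv_k|^p)$ is bounded in $L^1(\Omega)$, so Chacon's biting lemma yields (after passing to a subsequence, not relabeled) nested measurable sets $E_n\subset\Omega$ with $\cL^m(E_n)\to 0$ for which $(|Dv_k|^p\chi_{\Omega\setminus E_n})$ is equi-integrable for every fixed $n$. Applying the same lemma to $\cG(v_k,T_0)^{p^*}$ (or to $\cG(v_k,T_0)^q$ when $p=m$) for a fixed base point $T_0$ and combining the two extractions by a diagonal argument provides the exceptional sets relevant also to part (iii).

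\textbf{Step 2: truncate at a diverging level.} For each $\lambda>0$, \cite[Proposition~4.4]{DLSp1} yields $v_k^\lambda\in W^{1,\infty}(\Omega,\Iq)$ with $\Lip(v_k^\lambda)\leq C\lambda$, which agrees with $v_k$ outside a set of measure at most $C\lambda^{-p}\int_{\{M(|Dv_k|^p)>\lambda^p\}}|Dv_k|^p$, and whose gradient satisfies the pointwise bound $|Dv_k^\lambda|^p\leq C\,M(|Dv_k|^p)$ almost everywhere. Pick $\lambda_j\to\infty$ slowly enough that $\lambda_j^{-p}\sup_k\int_\Omega|Dv_k|^p\to 0$, and define $u_j:=v_{k_j}^{\lambda_j}$ along a subsequence $k_j\to\infty$ chosen to match the biting decomposition of Step 1. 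This immediately gives $\cL^m(\{u_j\neq v_{k_j}\})\to 0$, the first half of (i).

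\textbf{Step 3: verify the three properties.} For the weak convergence in (i), note that $u_j$ is uniformly bounded in $W^{1,p}(\Omega,\Iq)$ via the bound on $|Dv_k^\lambda|^p$ together with the $L^p$ bound inherited from $v_{k_j}$; on the other hand $\cG(u_j,v_{k_j})\to 0$ in measure on sets where both are $L^p$-tight, so $u_j\to u$ in $L^p$ and hence $u_j\weak u$ in the sense of Definition~\ref{d:weak convergence}. For (ii), given any measurable $A\subset\Omega$, split
\[
\int_A|Du_j|^p \;\leq\; \int_{A\setminus E_j}|Dv_{k_j}|^p\,\chi_{\{u_j=v_{k_j}\}} \;+\; C\int_{A\cap(E_j\cup\{u_j\neq v_{k_j}\})}M(|Dv_{k_j}|^p);
\]
the first summand is controlled by Step 1, while the second is handled by the $L^p$-boundedness of the Hardy--Littlewood maximal operator (or its weak-type endpoint version when $p=1$) together with $\cL^m(E_j\cup\{u_j\neq v_{k_j}\})\to 0$. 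For (iii), once $(|Du_j|^p)$ is equi-integrable, the Sobolev--Poincar\'e inequality applied to the classical Sobolev function $x\mapsto\cG(u_j(x),T_0)$ propagates the equi-integrability to $\cG(u_j,T_0)^{p^*}$, and thus to $|u_j|^{p^*}$.

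The main obstacle lies in Step~3(ii): the classical manipulation must be matched with the specific form of the $Q$-valued Lipschitz approximation, but since the key pointwise estimate $|Dv_k^\lambda|^p\leq C\,M(|Dv_k|^p)$ is exactly of the same shape as in the scalar case, the argument transfers without essential change. A minor technical point is the endpoint $p=1$, where the strong $L^p$ bound for the maximal operator is unavailable and one must instead invoke Kolmogorov's weak-type inequality applied to the truncated portion.
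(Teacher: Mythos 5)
Your Steps 2--3 contain a genuine gap in the proof of (ii), coming from the fact that you decouple the truncation levels $\lambda_j$ from the biting extraction. Two concrete problems. First, in your displayed estimate the second term involves $M(|Dv_{k_j}|^p)$, the maximal function of a merely $L^1$ function, which in general is not integrable, so that term can be infinite; and even if you replace it by $\big(M(|Dv_{k_j}|)\big)^p$, which is bounded in $L^1$ for $p>1$, the inference ``$L^1$-bounded plus $\cL^m(E_j\cup\{u_j\neq v_{k_j}\})\to 0$ implies the integral tends to $0$'' is false: an $L^1$-bounded sequence can concentrate exactly on sets of vanishing measure, which is precisely the phenomenon the lemma must tame, so the argument is circular at this point. (Your rendition of the biting lemma, with fixed exceptional sets $E_n$ working for the whole sequence, is also not the correct statement: the exceptional set must be coupled to the index $k_j$ along the subsequence.) Second, your condition on $\lambda_j$ (``slowly enough that $\lambda_j^{-p}\sup_k\int|Dv_k|^p\to0$'') is satisfied by every $\lambda_j\to\infty$, hence you are implicitly claiming that any diverging truncation level works, which is not true: if the gradients concentrate at a slowly diverging height, say $|Dv_k|=h_k$ on a set of measure $h_k^{-p}$ with $h_k\to\infty$ slowly, and $\lambda_j\gg h_{k_j}$, then the Lipschitz truncation does not touch the spike, so $|Du_j|^p$ carries unit mass on sets of vanishing measure and is not equi-integrable.

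The paper's proof fixes exactly this by applying Chacon's biting lemma (Lemma \ref{l:BM}) not to $|Dv_k|^p$ but to $g_k:=\big(M(|Dv_k|)\big)^p$, bounded in $L^1$ by the maximal function estimate; the lemma then produces \emph{simultaneously} the subsequence $(k_j)$ and levels $t_j\uparrow\infty$ with $(g_{k_j}\wedge t_j)$ equi-integrable. Taking $\Omega_j=\{g_{k_j}\le t_j\}$ and the Lipschitz extension of $v_{k_j}|_{\Omega_j}$ with constant $c\,t_j^{1/p}$ yields the pointwise domination $|Du_j|^p\le c\,(g_{k_j}\wedge t_j)$ on all of $\Omega$ (equality of the gradients on $\Omega_j$, the Lipschitz bound off $\Omega_j$, where $g_{k_j}>t_j$), and (ii) follows at once by domination by an equi-integrable sequence. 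This coupling of the truncation level to the biting extraction of the maximal functions is the missing idea; once it is in place, your verification of (i) and the Sobolev argument for (iii) via Lemma \ref{l:equipstar} (which also requires the equi-integrability of $\cG(u_j,Q\a{0})^p$, supplied by the strong $L^p$ convergence of $u_j$ to $u$) go through essentially as in the paper.
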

\begin{proof}
Let $g_k:=M^p(|Dv_k|)$ and notice that, by the estimate on the maximal function operator (see \cite{St} for instance),
$(g_k)\subset L^1(\Omega)$ is a bounded sequence.
Applying Chacon's biting lemma (see Lemma~\ref{l:BM} in the Appendix) to $(g_k)$, we get a subsequence $(k_j)$ and a sequence
$t_j\nearrow+\infty$ such that $(g_{k_j}\wedge t_j)$ are equi-integrable.

Let $\Omega_j:=\{x\in\Omega:\, g_{k_j}(x)\leq t_j\}$ and $u_j$ be the Lipschitz extension of $v_{k_j}\vert_{\Omega_j}$
with Lipschitz constant $c\,t_j^{1/p}$ (see \cite[Theorem 1.7]{DLSp1}).
Then, following \cite[Proposition 4.4]{DLSp1}, it is easy to verify that $\cL^m(\Omega\setminus\Omega_j)=o(t_j^{-1})$ and
$d_{W^{1,p}}(u_j,v_{k_j})=o(1)$.
Thus, (i) follows immediately from these properties and (ii) from
$$
|Du_j|^p=|Dv_{k_j}|^p\leq g_{k_j}\wedge t_j \text{ on } \Omega_j
\quad\text{and}\quad
|Du_j|^p\leq c\, t_j=c\, (g_{k_j}\wedge t_j) \text{ on } \Omega\setminus\Omega_j.
$$
As for (iii), note that the functions $f_j:=\cG(u_j,Q\a{0})$ are
in $W^{1,p}(\Omega)$, with $|Df_j|\leq|Du_j|$ by the very definition of metric space valued Sobolev maps.
Moreover, by (i), $f_j$ converge weakly to $|u|$, since $\||u|-f_j\|_{L^p}\leq \|\cG(u,u_j)\|_{L^p}$.
Hence, $(|f_j|^p)$ and $(|Df_j|^p)$ are equi-integrable.
In case $p\in[1,m)$, this implies (see Lemma~\ref{l:equipstar}) the equi-integrability of $(|u_j|^{p^*})$.
In case $p=m$, the property follows from H\"older inequality and Sobolev embedding (details are left to the reader).
\end{proof}

\subsection{Averaged equi-integrability}
The next lemma gives some properties of 
sequences of functions whose blow-ups are equi-integrable.
In what follows a function $\ph:[0,+\infty]\to[0,+\infty]$ is said superlinear at infinity if
$\lim_{t\uparrow+\infty}\frac{\ph(t)}{t}=+\infty$.

\begin{lemma}\label{l:equi mean}
Let $g_k\in L^1(\Omega)$ with $g_k\geq0$ and $\sup_k\fint_{C_{\rho_k}}\ph(g_k)<+\infty$,
where $\rho_k\downarrow 0$ and $\ph$ is superlinear at infinity.
Then, it holds
\begin{equation}\label{e:eq mean1}
\lim_{t\to+\infty}\left(\sup_k\rho_k^{-m}\int_{\{g_k\geq t\}}g_k\right)=0
\end{equation}
and, for sets $A_k\subseteq C_{\rho_k}$ such that $\cL^m(A_k)=o(\rho_k^{m})$,
\begin{equation}\label{e:eq mean2}
\lim_{k\to+\infty}\rho_k^{-m}\int_{A_k}g_k=0.
\end{equation}
\end{lemma}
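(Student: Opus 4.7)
The plan is to prove the two claims by a standard de la Vall\'ee Poussin / truncation argument, exploiting superlinearity of $\ph$ to convert the uniform bound on $\fint_{C_{\rho_k}} \ph(g_k)$ into uniform integrability of $g_k$ rescaled by $\rho_k^{-m}$.

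For \eqref{e:eq mean1}, I would first fix $M>0$ and use superlinearity of $\ph$ to pick $t_M$ such that $\ph(s) \geq M s$ for every $s \geq t_M$. Setting $C_0 := \sup_k \fint_{C_{\rho_k}} \ph(g_k) < +\infty$, on the set $\{g_k \geq t_M\}$ we can bound $g_k \leq \ph(g_k)/M$, so
\begin{equation*}
\rho_k^{-m}\int_{\{g_k\geq t_M\}}g_k \;\leq\; \frac{1}{M}\,\rho_k^{-m}\int_{C_{\rho_k}}\ph(g_k)\;\leq\;\frac{C_0}{M}.
\end{equation*}
Since $\sup_k$ of the left-hand side is monotone in $t$ and is bounded by $C_0/M$ for all $t\geq t_M$, letting $M\to+\infty$ (so $t_M\to+\infty$) yields \eqref{e:eq mean1}.

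For \eqref{e:eq mean2}, I would use a splitting at a large level $t$: for any $t>0$,
\begin{equation*}
\rho_k^{-m}\int_{A_k}g_k \;\leq\; \rho_k^{-m}\int_{A_k\cap\{g_k<t\}}g_k + \rho_k^{-m}\int_{\{g_k\geq t\}}g_k \;\leq\; t\,\frac{\cL^m(A_k)}{\rho_k^m}\; +\;\rho_k^{-m}\int_{\{g_k\geq t\}}g_k.
\end{equation*}
Given $\eps>0$, by \eqref{e:eq mean1} fix $t$ so large that the second term is $<\eps/2$ uniformly in $k$; then the hypothesis $\cL^m(A_k)=o(\rho_k^m)$ forces the first term to be $<\eps/2$ for all $k$ large. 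Hence the limit is $0$.

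I expect no serious obstacle: both statements are direct consequences of the Markov-type inequality enabled by superlinearity of $\ph$. The only point requiring mild care is matching the normalization, namely keeping track of the factor $\rho_k^{-m}$ so that the bound is against $\fint_{C_{\rho_k}} \ph(g_k)$ and not against $\int_{C_{\rho_k}} \ph(g_k)$, but this is simply absorbed into the constant $C_0$.
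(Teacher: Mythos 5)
Your proof is correct and follows essentially the same route as the paper: superlinearity gives the Markov-type bound $g_k\leq \ph(g_k)/M$ on $\{g_k\geq t_M\}$, which yields \eqref{e:eq mean1}, and \eqref{e:eq mean2} follows from the same splitting at level $t$ combined with $\cL^m(A_k)=o(\rho_k^m)$. Your choice of a multiplicative constant $M$ instead of the paper's $\eps$ with $t\leq\eps\ph(t)$ is only a cosmetic difference.
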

\begin{proof}
Using the superlinearity of $\ph$, for every $\eps>0$ 
there exists $R>0$ such that $t\leq \eps\ph(t)$ for every $t\geq R$,
so that
\begin{equation}\label{e:eq mean3}
\limsup_{t\to+\infty}\left(\sup_k\rho_k^{-m}\int_{\{g_k\geq t\}}g_k\right)\leq
\eps\, \sup_k\fint_{C_{\rho_k}}\ph(g_k)\leq C\,\eps.
\end{equation}
Then, \eqref{e:eq mean1} follows as $\eps\downarrow0$.
For what concerns \eqref{e:eq mean2}, we have
\begin{align*}
\rho_k^{-m}\int_{A_k}g_k&=\rho_k^{-m}\int_{A_k\cap\{g_k\leq t\}}g_k+\rho_k^{-m}\int_{A_k\cap\{g_k\geq t\}}g_k
\leq
t\rho_k^{-m}\cL^m(A_k)+\sup_k\rho_k^{-m}\int_{\{g_k\geq t\}}g_k.
\end{align*}
By the hypothesis $\cL^m(A_k)=o(\rho_k^{m})$,
taking the limit as $k$ tends to $+\infty$ and then as $t$ tends to
$+\infty$, by \eqref{e:eq mean1} the right hand side above vanishes.
\end{proof}

\subsection{Push-forward of currents under $Q$-functions}\label{s:current}
We define now the integer rectifiable current associated to the graph of a $Q$-valued function.
As for Lipschitz single valued functions, we can associate to the graph of a Lipschitz $Q$-function
$u:\Omega\to\Iq$ a rectifiable current $T_{u,\Omega}$ defined by
\begin{equation}\label{e:Tu}
\la T_{u,\Omega}, \omega\ra = \int_{\Omega} \sum_i\big\langle \omega\left(x,u_i(x)\right), \vec{T}_{u_i}(x)\big\rangle\,d\,\cH^m(x)
\quad\forall\;\omega\in\mathscr{D}^m(\R{m+n}),
\end{equation}
where $\vec{T}_{u_i}(x)$ is the $m$-vector given by
$\left(e_1+\de_1 u_{i}(x)\right)\wedge\cdots\wedge\left(e_m+\de_m u_{i}(x)\right)\in \Lambda_m(\R{m+n}).$
In coordinates, writing $\omega(x,y)=\sum_{l=1}^N\sum_{|\alpha|=|\beta|=l}\omega^l_{\alpha\beta}(x,y)dx_{\bar\alpha}\wedge dy_{\beta}$,
where $\bar \alpha$ denotes the complementary multi-index of $\alpha$, the current $T_{u,\Omega}$ acts in the following way:
\begin{equation}\label{e:Tu minori}
\la T_{u,\Omega},\omega\ra =
\int_{\Omega}\sum_{i=1}^Q \sum_{l=1}^N\sum_{|\alpha|=|\beta|=l}
\sigma_{\alpha}\,\omega^l_{\alpha\beta}\big(x,u_i(x)\big)M_{\alpha\beta}\big(Du_i(x)\big)dx,
\end{equation}
with $\sigma_{\alpha}\in\{-1,1\}$ the sign of the permutation ordering $(\alpha,\bar\alpha)$ in the natural increasing order and
$M_{\alpha\beta}(A)$ denoting the $\alpha, \beta$ minor of a matrix $A\in\R{n\times m}$,
$$
M_{\alpha \beta} (A) := {\rm det}\, \left(\begin{array}{clc}
A_{\alpha_1\beta_1} & \dots & A_{\alpha_1\beta_k}\\
\vdots & \ddots & \vdots\\
A_{\alpha_k\beta_1} & \dots & A_{\alpha_k\beta_k}
\end{array}\right).
$$

Analogously, assuming that $\Omega$ is a Lipschitz domain, using parametrizations of the boundary,
one can define the current associate to the graph of $u$ restricted to $\de \Omega$, and both $T_{u,\Omega}$ and $T_{u,\de\Omega}$ turn out to be rectifiable current -- see \cite[Appendix C]{DLSp2}.
The main result about the graphs of Lipschitz $Q$-functions we are going to use is the following theorem proven in \cite[Theorem C.3]{DLSp2}.

\begin{theorem}\label{t:de Tf}
For every $\Omega$ Lipschitz domain and $u\in \Lip(\Omega,\Iq)$, $\de \,T_{u,\Omega}=T_{u,\de \Omega}$.
\end{theorem}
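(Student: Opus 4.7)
The plan is to reduce Theorem~\ref{t:de Tf} to the classical single-valued Stokes theorem via an approximation of $u$ by piecewise affine $Q$-valued maps, combined with a combinatorial cancellation on the internal faces of the triangulation. I would proceed in four steps: approximation, branch-wise Stokes on each simplex, face cancellation, and passage to the limit.

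First, I would construct a sequence $u^k\in\Lip(\Omega,\Iq)$ that is affine on each simplex of a triangulation $\mathcal{T}^k$ of $\Omega$ with mesh size tending to zero, with $u^k\to u$ uniformly, $\sup_k\Lip(u^k)<\infty$, and such that the gradient minors converge in $L^1$ (modulo the inherent labelling ambiguity of branches). A natural implementation composes $u$ with Almgren's biLipschitz embedding $\xii:\Iqs\to\R{N}$, interpolates linearly in $\R{N}$ on each simplex, and retracts onto $\xii(\Iqs)$ through the Lipschitz retraction. From the explicit formula \eqref{e:Tu minori}, convergence of the gradient minors combined with uniform convergence of the maps suffices to ensure $T_{u^k,\Omega}\to T_{u,\Omega}$ and $T_{u^k,\de\Omega}\to T_{u,\de\Omega}$ as currents.

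For piecewise affine $u^k$, on each simplex $\sigma\in\mathcal{T}^k$ we write $u^k|_\sigma=\sum_j q^\sigma_j\a{\ell^\sigma_j}$ with affine branches $\ell^\sigma_j(x)=a^\sigma_j+L^\sigma_j x$, each of which is classically single-valued Lipschitz. The classical Stokes theorem for Lipschitz graphs yields $\de T_{\ell^\sigma_j,\sigma}=T_{\ell^\sigma_j,\de\sigma}$, so that
\[
\de T_{u^k,\Omega}=\sum_{\sigma\in\mathcal{T}^k}\sum_j q^\sigma_j\, T_{\ell^\sigma_j,\de\sigma}.
\]
On an interior face $F=\sigma\cap\sigma'$, continuity of $u^k$ as a $Q$-function forces the two multi-sets $\{(\ell^\sigma_j,q^\sigma_j)\}_j$ and $\{(\ell^{\sigma'}_j,q^{\sigma'}_j)\}_j$ to coincide pointwise on $F$, so the contributions from $\sigma$ and $\sigma'$ on $F$ carry opposite orientations and cancel exactly. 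Only the contributions from faces lying in $\de\Omega$ survive, assembling into $T_{u^k,\de\Omega}$. Passing to the limit using the first step delivers the theorem.

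The principal obstacle is the first step: obtaining $L^1$ (not merely weak) convergence of the gradient minors $M_{\alpha\beta}(Du^k_i)$, since these are nonlinear in the derivatives and the retraction $\R{N}\to\xii(\Iqs)$ is only Lipschitz rather than smooth, so one must argue carefully about how minors transform under the composition. A cleaner intrinsic alternative would bypass Almgren's embedding altogether: locally decompose $u$ into Lipschitz single-valued branches on open regions where the $Q$-values separate, apply classical Stokes on each branch, and handle the branching locus via the local decomposition of $u$ into lower-multiplicity $Q$-functions afforded by the intrinsic Lipschitz theory of \cite{DLSp1}, iterating downward on the multiplicity until every piece is single-valued.
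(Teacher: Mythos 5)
First, a point of comparison: the paper does not prove Theorem~\ref{t:de Tf} at all --- it is quoted from \cite[Theorem C.3]{DLSp2}, and the introduction explicitly recalls that the first (lengthy) proof is in \cite{Alm}. So there is no internal proof to match your argument against; the question is whether your proposal is a complete proof, and it is not. The entire weight rests on your Step 1, which you yourself flag, but the difficulty is worse than ``$L^1$ convergence of the minors'': the specific construction you offer does not produce the objects Steps 2--3 need. Composing $u$ with Almgren's embedding $\xii$, interpolating linearly in $\R{N}$ on each simplex and retracting back onto $\xii(\Iqs)$ via the Lipschitz retraction $\ro$ yields Lipschitz $Q$-valued maps, but on a given simplex they are in general \emph{not} of the form $\sum_j q_j\a{a_j+L_j\cdot x}$: the linear interpolation leaves $\xii(\Iqs)$ and the retraction is merely Lipschitz, so all affine structure is destroyed and the branch-wise classical Stokes theorem cannot be applied to these approximants. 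If instead you try an intrinsic ``affine interpolation of vertex values'', you hit the labelling obstruction head on: there is no canonical matching of the $Q$ values of $u$ at the $m+1$ vertices of a simplex, and per-simplex choices of matching need not be compatible on shared faces, so continuity of $u^k$ --- the very hypothesis your face-cancellation step uses --- is lost. No approximation theorem producing piecewise branch-affine $Q$-maps with uniform Lipschitz bounds and a.e.\ (or $L^1$) convergence of gradients is provided by \cite{DLSp1} or, to my knowledge, by the literature; constructing one near the branch set of $u$ is essentially of the same order of difficulty as the theorem itself.

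The parts of your plan that do work are the ones that are not the crux: for genuinely branch-affine maps the interior-face cancellation is correct (at a generic point of an interior face the values are pairwise distinct, so the matching of the affine restrictions is forced and locally constant, and two distinct affine maps agree only on a null subset of the face), and the passage to the limit is unproblematic once Step 1 is granted, since $\de$ is continuous under weak convergence of currents, so uniform convergence of the maps plus dominated convergence of the minors suffices. Your ``intrinsic alternative'' is likewise only a restatement of the problem: the decomposition into lower-multiplicity pieces from \cite{DLSp1} is available only on the open set where the values of $u$ separate, and the whole content of the theorem is that no boundary mass is created along the coincidence (branching) set, which can be an arbitrary closed set and is exactly what makes Almgren's original proof long and what the argument in \cite[Appendix C]{DLSp2} is designed to control. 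As it stands, your proposal is a sensible program with the decisive step missing.
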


\section{Quasiconvexity and lower semicontinuity}\label{s:quasi}
In this section we prove Theorem \ref{t:qc}.
Before starting, we link our notion of quasiconvexity with the $Q$-semiellipticity introduced in \cite{Mat}.

\begin{remark}\label{r: mat}
Following Mattila, a quadratic integrand is a function of the form
\begin{equation*}
E(u):=\int_\Omega \sum_i\langle A Du_i,Du_i\rangle,
\end{equation*}
where $\R{n\times m}\ni M\mapsto A\,M\in \R{n\times m}$ is a linear symmetric map.
This integrand is called $Q$-semielliptic if
\begin{equation}
\int_{\R{m}} \sum_i\langle A Df_i,Df_i\rangle\geq 0 \quad \forall \;f\in\Lip(\R{m},\Iq)\;\text{with compact support.}
\end{equation}
Obviously a $Q$-semielliptic quadratic integrand is $k$-semielliptic for every $k\leq Q$.
We now show that $Q$-semiellipticity and quasiconvexity coincide.
Indeed, consider a linear map $x\mapsto L\cdot x$ and a Lipschitz $k$-valued function $g(x)=\sum_{i=1}^k\a{f_i(x)+L\cdot x}$,
where $f=\sum_i\a{f_i}$ is compactly supported in $C_1$ and $k\leq Q$.
Recall the notation $\eta\circ f=k^{-1}\sum_i f_i$ and the chain rule formulas in \cite[Section 1.3.1]{DLSp1}.
Then,
\begin{align*}
E(g)&=E(f)+k\,\langle A\,L,L\rangle+2\int_{C_1}\sum_i\langle A\,L,Df_i\rangle\\
&=E(f)+k\,\langle A\,L,L\rangle+2\,k\int_{C_1}\langle A\,L,D(\eta\circ f)\rangle=
E(f)+k\,\langle A\,L,L\rangle,
\end{align*}
where the last equality follows integrating by parts.
This equality obviously implies the equivalence of $Q$-semiellipticity and quasiconvexity.
\end{remark}

\subsection{Sufficiency of quasiconvexity}
We prove that, given a sequence $(v_k)\subset W^{1,p}(\Omega,\Iq)$ weakly converging to $u\in W^{1,p}(\Omega,\Iq)$ and
$f$ as in the statement of Theorem \ref{t:qc}, then
\begin{equation}\label{e:sc}
F(u)\leq \liminf_{k\to\infty}F(v_k).
\end{equation}

Up to extracting a subsequence, we may assume that the inferior limit in \eqref{e:sc} is actually a limit
(in what follows, for the sake of convenience, subsequences will never be relabeled).
Moreover, using Lemma \ref{l:equiint}, again up to a subsequence, there exists $(u_k)$ such that (i)-(iii) in Lemma \ref{l:equiint}
hold.
If we prove
\begin{equation}\label{e:sc2}
F(u)\leq \lim_{k\to\infty}F(u_k),
\end{equation}
then \eqref{e:sc} follows, since, by the equi-integrability properties (ii) and (iii),
\begin{align*}
F(u_k)&=\int_{\{v_k=u_k\}}f(x,v_k,Dv_k)+
\int_{\{v_k\neq u_k\}}f(x,u_k,Du_k)\nonumber\\
&\leq F(v_k)+C\int_{\{v_k\neq u_k\}}\left(1+|u_k|^q+|Du_k|^p\right)
=F(v_k)+o(1).
\end{align*}

For the sequel, we will fix a function $\varphi:[0,+\infty)\to[0,+\infty]$ superlinear at infinity such that
\begin{equation} \label{e:char2bis}
\sup_k\int_{\Om}\big(\varphi(|u_k|^q)+\varphi(|Du_k|^p)\big)dx<+\infty.
\end{equation}
In order to prove \eqref{e:sc2}, it suffices to show that there exists a subset of full measure $\tilde \Omega\subseteq\Omega$ such
that for $x_0\in \tilde \Omega$ we have
\begin{equation}\label{e:sc density}
f(x_0,u(x_0),Du(x_0))\leq\frac{d\mu}{d\cL^m}(x_0),
\end{equation}
where $\mu$ is the weak$^*$ limit in the sense of measure of any converging subsequence of $\big(f(x,u_k,Du_k)\cL^m\res\Omega\big)$.
We choose $\tilde \Omega$ to be the set of points $x_0$ which satisfy \eqref{e:caldzyg} in Lemma~\ref{l:caldzyg} and,
for a fixed subsequence with $\big(\ph(|u_k|^q)+\ph(|Du_k|^p)\big)\cL^m\res\Omega\weaks\nu$, satisfy
\begin{equation}\label{e:pt1}
\frac{d\nu}{d\cL^m}(x_0)<+\infty.
\end{equation}
Note that such $\tilde\Omega$ has full measure by the standard Lebesgue differentiation theory of
measure and Lemma~\ref{l:caldzyg}.

We prove \eqref{e:sc density} by a blow-up argument following Fonseca and M\"uller \cite{FoMu}.
Since in the space $\Iq$ translations make sense only for $Q$ multiplicity points,
blow-ups of $Q$-valued functions are not well-defined in general.
Hence, to carry on this approach, we need first to decompose the approximating
functions $u_k$ according to the structure of the first order-approximation
$T_{x_0}u$ of the limit, in such a way to reduce to the case of full multiplicity tangent planes.

\begin{claim}\label{cl:1}
Let $x_0\in\tilde \Omega$ and $u(x_0)=\sum_{j=1}^Jq_j\,\a{a_j}$, with $a_i\neq a_j$ for $i\neq j$.
Then, there exist $\rho_k\downarrow 0$ and $(w_k)\subseteq W^{1,\infty}(C_{\rho_k}(x_0),\Iq)$ such that:
\begin{itemize}
\item[(a)] $w_k=\sum_{j=1}^J\a{w^j_k}$ with $w^j_k \in W^{1,\infty}(C_{\rho_k}(x_0),\I{q_j})$,
$\norm{\cG(w_k,u(x_0))}{L^\infty(C_{\rho_k}(x_0))}=o(1)$ and
$\cG(w_k(x),u(x_0))^2=\sum_{j=1}^J\cG(w_k^j(x),q_j\a{a_j})^2$ for every $x\in C_{\rho_k}(x_0)$;
\item[(b)] $\fint_{C_{\rho_k}(x_0)}\cG^p(w_{k},T_{x_0}u)=o(\rho_k^p)$;
\item[(c)] $\lim_{k\uparrow+\infty}\fint_{C_{\rho_k}(x_0)}f\big(x_0,u(x_0),Dw_k\big)=\frac{d\mu}{d\cL^m}(x_0)$.
\end{itemize}
\end{claim}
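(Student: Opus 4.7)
\emph{Setup and diagonal choice of $\rho_k$.} The plan is to adapt the Fonseca--M\"uller blow-up strategy to the $Q$-valued setting: since $a_1,\ldots,a_J$ are pairwise distinct, on most of a sufficiently small cube $C_{\rho_k}(x_0)$ the $Q$ support points of $u_k$ split canonically into $J$ clusters of cardinalities $q_1,\ldots,q_J$, which will become the strands of $w_k^j$ after Lipschitz extension. Using Lemma~\ref{l:caldzyg} applied to $u$, the strong $L^p$-convergence $u_k\to u$ coming from $u_k\weak u$ in $W^{1,p}$, the vague convergence $f(\cdot,u_k,Du_k)\cL^m\res\Omega\weaks\mu$ and the density property \eqref{e:pt1}, a standard diagonal argument produces (after a further subsequence, not relabelled) $\rho_k\downarrow 0$ along which simultaneously
\begin{align*}
&\epsilon_k:=\rho_k^{-m-p}\int_{C_{\rho_k}(x_0)}\cG^p(u_k,T_{x_0}u)\,dx\to 0,\quad \rho_k\,\Lip(u_k)\to 0,\quad\Lip(u_k)^p\,\epsilon_k\to 0,\\
&\rho_k^{-m}\int_{C_{\rho_k}(x_0)} f(x,u_k,Du_k)\,dx\to\tfrac{d\mu}{d\cL^m}(x_0),\quad\sup_k\fint_{C_{\rho_k}(x_0)}\bigl(\ph(|u_k|^q)+\ph(|Du_k|^p)\bigr)<+\infty.
\end{align*}

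\emph{Clustering and extension.} Fix $\delta_0:=\tfrac14\min_{i\neq j}|a_i-a_j|$ and set $\delta_k:=\epsilon_k^{1/(2p)}\rho_k\downarrow 0$. Chebyshev's inequality applied to the first display gives $\cL^m(E_k)\leq C\epsilon_k^{1/2}\rho_k^m$ for $E_k:=\{x\in C_{\rho_k}(x_0):\cG(u_k(x),T_{x_0}u(x))\geq\delta_k\}$. For $k$ large, on $G_k:=C_{\rho_k}(x_0)\setminus E_k$ one has $\cG(u_k,u(x_0))\leq\delta_k+C|Du(x_0)|\rho_k<\delta_0$, so the $Q$ support points of $u_k(x)$ separate uniquely into $J$ disjoint clusters of sizes $q_1,\ldots,q_J$ located in the balls $B_{\delta_0}(a_j)$. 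This defines Lipschitz maps $\tilde u_k^j:G_k\to\I{q_j}$ with $u_k|_{G_k}=\sum_j\a{\tilde u_k^j}$ and $\Lip(\tilde u_k^j)\leq\Lip(u_k)$. Extending each $\tilde u_k^j$ to $w_k^j\in W^{1,\infty}(C_{\rho_k}(x_0),\I{q_j})$ via the $Q$-valued Lipschitz extension (Theorem~1.7 of \cite{DLSp1}), with $\Lip(w_k^j)\leq c\,\Lip(u_k)$, set $w_k:=\sum_j\a{w_k^j}$.

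\emph{Verification.} For any $x\in E_k$, picking a nearest $y\in G_k$ with $|x-y|\leq\sqrt m\,\rho_k$ and using the Lipschitz bound gives $\cG(w_k^j(x),q_j\a{a_j})\leq\delta_k+C|Du(x_0)|\rho_k+c\Lip(u_k)\sqrt m\,\rho_k=o(1)$, so each $w_k^j$ takes values in $B_{\delta_0}(a_j)^{q_j}$ on all of $C_{\rho_k}(x_0)$ for $k$ large and the optimal matching in $\cG(w_k,u(x_0))$ pairs the points of $w_k^j$ with $q_j\a{a_j}$, giving (a). For (b), on $G_k$ we have $w_k=u_k$ and thus $\int_{G_k}\cG^p(w_k,T_{x_0}u)\leq\epsilon_k\rho_k^{m+p}$, while on $E_k$
\[
\int_{E_k}\cG^p(w_k,T_{x_0}u)\leq C\cL^m(E_k)\bigl(\Lip(u_k)\rho_k+\rho_k\bigr)^p\leq C\epsilon_k^{1/2}\bigl(1+\Lip(u_k)^p\bigr)\rho_k^{m+p}=o(\rho_k^{m+p}).
\]
Property (c) is obtained by decomposing $\fint f(x_0,u(x_0),Dw_k)-\fint f(x,u_k,Du_k)$ according to the partition $\{G_k\cap\{|Du_k|^p\leq t\},\,\{|Du_k|^p>t\},\,E_k\}$: on the first subset, uniform continuity of $f$ on the compact set $\{x_0\}\times\overline{B_{\delta_0}(u(x_0))}^Q\times\{|A|\leq t^{1/p}\}^Q$ drives the corresponding error to $0$ as $k\to\infty$; on the other two, the growth hypothesis together with Lemma~\ref{l:equi mean} applied to $\ph(|u_k|^q)+\ph(|Du_k|^p)$ drives the remaining errors to $0$ after letting $t\to\infty$.

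\emph{Main obstacle.} The delicate point is the joint calibration of $\delta_k$ and $\rho_k$ so that (a), (b) and (c) hold simultaneously despite the Lipschitz constants $\Lip(u_k)$ possibly blowing up: $\delta_k$ must decay slowly enough (compared to $\rho_k$ and to $\epsilon_k^{1/p}$) that $E_k$ is negligible in measure, yet the Lipschitz extension on $E_k$ must remain $L^\infty$-close to $u(x_0)$ and must not spoil the $o(\rho_k^{m+p})$ decay in (b). This tension is resolved by the diagonal extraction of Step~1, which exploits Lemma~\ref{l:caldzyg} together with the strong $L^p$-convergence provided by Lemma~\ref{l:equiint} and the equi-integrability control of Lemma~\ref{l:equi mean}.
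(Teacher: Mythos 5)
Your construction of $w_k$ is genuinely different from the paper's: you split the support of $u_k$ into $J$ clusters on a good set $G_k$ and then apply a $Q$-valued Lipschitz extension to pass from $G_k$ to the full cube, whereas the paper sets $r_k=2|Du|(x_0)\rho_k$ and takes $w_k=\vartheta_k\circ u_k$, where $\vartheta_k:\Iq\to\overline B_{r_k}(u(x_0))$ is the $1$-Lipschitz retraction from \cite[Lemma~3.7]{DLSp1}. The retraction does the clustering for free (its target is a small ball around $u(x_0)$, so (a) is immediate), gives (b) with no effort because $\Lip(\vartheta_k)\leq 1$ and $\vartheta_k\circ T_{x_0}u=T_{x_0}u$ on $C_{\rho_k}(x_0)$, and—crucially—never invokes $\Lip(u_k)$.

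The gap in your argument is precisely at that point: your diagonal choice asks for $\rho_k\Lip(u_k)\to 0$ (and, in your proof of (b), for $\eps_k^{1/2}\Lip(u_k)^p\to 0$, which is not even the condition you listed) simultaneously with $\rho_k^{-m}\int_{C_{\rho_k}(x_0)}f(x,u_k,Du_k)\,dx\to \frac{d\mu}{d\cL^m}(x_0)$. These two requirements pull the diagonal in opposite directions. The vague convergence $f(\cdot,u_l,Du_l)\cL^m\weaks\mu$ forces you, after fixing $\rho_k$, to take the index $l$ of the term $u_l$ you use \emph{at least} as large as some threshold $L(\rho_k)$, and $L(\rho)$ typically tends to $+\infty$ as $\rho\to 0$. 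On the other hand, $\rho_k\Lip(u_{l})\to 0$ forces $\rho_k$ to be \emph{small given $l$}. The two are compatible only if $\rho\,\Lip(u_{L(\rho)})\to 0$ as $\rho\to 0$, and there is no reason for this to hold: Lemma~\ref{l:equiint} produces $(u_k)\subset W^{1,\infty}$ but gives no quantitative bound on $\Lip(u_k)$, which may grow arbitrarily fast. Concretely, for a sequence oscillating at scale $1/k$ with $\Lip(u_k)\sim k$ one expects $L(\rho)\sim 1/\rho$, so $\rho\,\Lip(u_{L(\rho)})\sim 1$ and the diagonal never closes. Since everything downstream—the $L^\infty$ estimate in (a) on $E_k$, the bound on $E_k$ in (b), and the smallness of the error terms in (c) via Lemma~\ref{l:equi mean}—is threaded through the inequality $\Lip(w_k^j)\leq c\Lip(u_k)$ on the extended region, the argument does not go through as written. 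The fix is exactly what the paper does: replace the extension by the retraction $\vartheta_k$, which kills the dependence on $\Lip(u_k)$ and leaves only the measure-theoretic conditions \eqref{e:rad1}--\eqref{e:rad3}, which a genuine diagonal extraction does handle.
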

\begin{proof}
We choose radii $\rho_k$ which satisfy the following conditions:
\begin{gather}
\sup_k \fint_{C_{\rho_k}(x_0)}\big(\ph(|u_k|^q)+\ph(|Du_{k}|^p)\big)<+\infty,\label{e:rad1}\\
\fint_{C_{\rho_k}(x_0)}f\big(x,u_k,Du_k\big)\to\frac{d\mu}{d\cL^m}(x_0),\label{e:rad2}\\
\fint_{C_{\rho_k}(x_0)}\cG^p(u_{k},u)=o(\rho_k^p)\quad
\text{and}
\quad\fint_{C_{\rho_k}(x_0)}\cG^p(u_{k},T_{x_0}u)=o(\rho_k^p).\label{e:rad3}
\end{gather}
As for \eqref{e:rad1} and \eqref{e:rad2},
since 
$$
\left(\ph(|u_k|^q)+\ph(|Du_k|^p)\right)\cL^m\res\Omega\weaks\nu
\qquad\mbox{and}\qquad
f(x,u_k,Du_k)\cL^m\res\Omega\weaks\mu\, ,
$$ 
we only need to check that $\nu(\de C_{\rho_k}(x_0))=\mu(\de C_{\rho_k}(x_0))=0$
(see for instance Proposition 2.7 of \cite{DeLBook}).
Fixed such radii, for every $k$ we can choose a term in the 
sequence $(u_k)$ in such a way that the first half of
\eqref{e:rad3} holds (because of the strong convergence 
of $(u_k)$ to $u$): the second half is, hence, consequence of \eqref{e:caldzyg}.

Set $r_k=2\,|Du|(x_0)\,\rho_k$ and consider the retraction maps $\vartheta_k:\Iq\to \overline {B}_{r_k}(u(x_0))\subset \Iq$
constructed in \cite[Lemma 3.7]{DLSp1} (note that for $k$ sufficiently large, these maps are well defined).
The functions $w_k:=\vartheta_k\circ u_k$ satisfy the conclusions of the claim.

Indeed, since $\vartheta_k$ takes values in $\overline {B}_{r_k}(u(x_0))\subset \Iq$ and $r_k\to0$, (a) follows straightforwardly.
As for (b), the choice of $r_k$ implies that
$\vartheta_k\circ T_{x_0}u=T_{x_0}u$ on $C_{\rho_k}(x_0)$,
because
\begin{equation}\label{e:rk}
\cG(T_{x_0}u(x),u(x_0))\leq |Du(x_0)|\,|x-x_0|\leq
|Du(x_0)|\,\rho_k=\frac{r_k}{2}.
\end{equation}
Hence, being $\Lip(\vartheta_k)\leq 1$, from \eqref{e:rad3} we conclude
$$
\fint_{C_{\rho_k}(x_0)}\cG^p(w_{k},T_{x_0}u)
=\fint_{C_{\rho_k}(x_0)}\cG^p(\vartheta_k\circ u_{k},\vartheta_k\circ T_{x_0}u)
\leq \fint_{C_{\rho_k}(x_0)}\cG^p(u_{k},T_{x_0}u)=o(\rho_k^p).
$$
To prove (c), set $A_k=\big\{w_k\neq u_k\big\}=\{\cG(u_k,u(x_0))>r_k\}$ and note that,
by Chebychev's inequality, we have
\begin{align*}
r_k^p\,\cL^m(A_k) &\leq \int_{A_k}\cG^p(u_k,u(x_0))\leq
2^{p-1}\int_{A_k}\cG^p(u_k,T_{x_0}u)+2^{p-1}\int_{A_k}\cG^p(T_{x_0}u,u(x_0))\notag\\
& \stackrel{\eqref{e:rad3},\,\eqref{e:rk}}{\leq} o(\rho_k^{m+p})+\frac{r_k^p}{2}\,\cL^m(A_k),
\end{align*}
which in turn implies
\begin{equation}\label{e:Ak}
\cL^m(A_k)=o(\rho_k^m).
\end{equation}
Using Lemma~\ref{l:equi mean}, we prove that
\begin{equation}\label{e:f0 fw}
\lim_{k\to+\infty}\left(\fint_{C_{\rho_k}(x_0)}f\left(x_0,u(x_0),Dw_k\right)
-\fint_{C_{\rho_k}(x_0)}f\left(x,w_k,Dw_k\right)\right)=0.
\end{equation}
Indeed, for every $t>0$,
\begin{align}\label{e:f0 fwbis}
\lefteqn{\left|\fint_{C_{\rho_k}(x_0)}f\left(x_0,u(x_0),Dw_k\right)-\fint_{C_{\rho_k}(x_0)}f\left(x,w_k,Dw_k\right)\right|}\notag\\
&\leq\rho_k^{-m}\int_{C_{\rho_k}(x_0)\cap \{|Dw_k|\geq t\}}\Big(f\left(x_0,u(x_0),Dw_k\right)+f\left(x,w_k,Dw_k\right)\Big)\notag\\
&+\rho_k^{-m}\int_{C_{\rho_k}(x_0)\cap \{|Dw_k|< t\}}|f\left(x_0,u(x_0),Dw_k\right)-f\left(x,w_k,Dw_k\right)|\notag\\
&\leq \sup_k\frac{C}{\rho_k^{m}}\int_{C_{\rho_k}(x_0)\cap \{|Dw_k|\geq t\}}\big(1+|w_k|^q+|Dw_k|^p\big)+
\omega_{f,t}(\rho_k+\norm{\cG(w_k,u(x_0)}{L^{\infty}}),
\end{align}
where $\omega_{f,t}$ is a modulus of continuity for $f$ restricted to the compact set
$\overline{C}_{\rho_1}(x_0)\times \overline{B}_{|u(x_0)|+1}\times \overline B_t\subset\Omega \times (\R{n})^Q\times(\R{m+n})^Q$.
To fully justify the last inequality we remark that we choose the same order of the gradients in both integrands so that the order
for $u(x_0)$ and for $w_k$ is the one giving the $L^\infty$ distance between them.
Then, \eqref{e:f0 fw} follows by passing to the limit in \eqref{e:f0 fwbis}
first as $k\to+\infty$ and then as $t\to+\infty$ thanks to \eqref{e:eq mean1} in Lemma \ref{l:equi mean} applied to $1+|w_k|^q$ (which is equi-bounded in $L^\infty(C_{\rho_k}(x_0))$ and, hence, equi-integrable) and to $|Dw_k|^p$.

Thus, in order to show item (c), it suffices to prove
\begin{equation}\label{e:fw fu}
\lim_{k\to+\infty}\left(\fint_{C_{\rho_k}(x_0)}f\left(x,u_k,Du_k\right)
-\fint_{C_{\rho_k}(x_0)}f\left(x,w_k,Dw_k\right)\right)=0\, .
\end{equation}
By the definition of $A_k$, we have
\begin{align*}\label{e:fw fubis}
\lefteqn{\left|\fint_{C_{\rho_k}(x_0)}f\left(x,u_k,Du_k\right)-\fint_{C_{\rho_k}(x_0)}f\left(x,w_k,Dw_k\right)\right|}\notag\\
&\leq\rho_k^{-m}\int_{A_k}\Big(f\left(x,u_k,Du_k\right)+f\left(x,w_k,Dw_k\right)\Big)\notag\\
&\leq \frac{C}{\rho_k^{m}}\int_{A_k}\big(1+|w_k|^q+|u_k|^q+|Dw_k|^p+|Du_k|^p\big).
\end{align*}
Hence, by the equi-integrability of $u_k$, $w_k$ and their gradients, and by \eqref{e:Ak},
we can conclude from \eqref{e:eq mean2} of Lemma \ref{l:equi mean}
\end{proof}

Using Claim \ref{cl:1}, we can now ``blow-up'' the functions 
$w_k$ and conclude the proof of \eqref{e:sc density}.
More precisely we will show:

\begin{claim}\label{cl:2}
For every $\gamma>0$, there exist $(z_k)\subset W^{1,\infty}(C_1,\Iq)$ 
such that $z_k\vert_{\de C_1}=T_{x_0}u\vert_{\de C_1}$ for every $k$ and
\begin{equation}\label{e:sc tilde}
\limsup_{k\to+\infty}\int_{C_1}f\big(x_0,u(x_0),D z_k\big)\leq \frac{d\mu}{d\cL^m}(x_0)+\gamma.
\end{equation}
\end{claim}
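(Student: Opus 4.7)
My plan is to rescale the maps $w_k$ from Claim \ref{cl:1} to the fixed cube $C_1$ and then glue them to the prescribed affine boundary datum by a cutoff interpolation in a thin annulus near $\de C_1$. Write the decomposition of Claim \ref{cl:1}(a) as $w_k=\sum_{j=1}^J w_k^j$ with $w_k^j=\sum_l\a{w_{k,l}^j}\in W^{1,\infty}(C_{\rho_k}(x_0),\I{q_j})$, and let $L_j\in\R{n\times m}$ denote the common value of $Du_i(x_0)$ on the sheet $u_i(x_0)=a_j$ (well defined by property~(ii) of the approximate differential). For $y\in C_1$, set
\begin{equation*}
\tilde w_k(y):=\sum_{j,l}\a{a_j+\rho_k^{-1}\bigl(w_{k,l}^j(x_0+\rho_k y)-a_j\bigr)},\qquad \tilde T(y):=\sum_j q_j\a{a_j+L_j\cdot y}.
\end{equation*}
A chain-rule computation gives $D\tilde w_k(y)=Dw_k(x_0+\rho_k y)$, so a change of variables converts item~(c) of Claim \ref{cl:1} into $\int_{C_1} f(x_0,u(x_0),D\tilde w_k)\to \frac{d\mu}{d\cL^m}(x_0)$ and item~(b) into $\int_{C_1}\cG^p(\tilde w_k,\tilde T)=o(1)$. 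The explicit choice $r_k=2|Du|(x_0)\rho_k$ of the retraction radius in the proof of Claim \ref{cl:1} further yields $\|\tilde w_k\|_{L^\infty(C_1)}\le C$, and \eqref{e:rad1} combined with \eqref{e:eq mean1} of Lemma \ref{l:equi mean} shows that $\{|D\tilde w_k|^p\}$ is equi-integrable on $C_1$.

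Next, fix $\delta\in(0,1/4)$ and a smooth cutoff $\eta_\delta\in C_c^\infty(C_1;[0,1])$ with $\eta_\delta\equiv 1$ on $C_{1-2\delta}$, $\eta_\delta\equiv 0$ on $C_1\setminus C_{1-\delta}$, and $\|D\eta_\delta\|_\infty\le C/\delta$, and define
\begin{equation*}
z_k(y):=\sum_{j,l}\a{\eta_\delta(y)\,\tilde w_{k,l}^j(y)+\bigl(1-\eta_\delta(y)\bigr)\bigl(a_j+L_j\cdot y\bigr)}.
\end{equation*}
Within each sheet $j$, the target value $a_j+L_j\cdot y$ does not depend on $l$, so the sum is invariant under any relabeling of the $q_j$ components of $\tilde w_k^j$; hence $z_k\in W^{1,\infty}(C_1,\Iqs)$, $z_k=\tilde w_k$ on $C_{1-2\delta}$, and $z_k|_{\de C_1}=\tilde T|_{\de C_1}=T_{x_0}u|_{\de C_1}$.

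To estimate the energy, split $\int_{C_1}f(x_0,u(x_0),Dz_k)$ into contributions from $C_{1-2\delta}$, from the annulus $A_\delta:=C_{1-\delta}\setminus C_{1-2\delta}$, and from $C_1\setminus C_{1-\delta}$. On the first, $z_k=\tilde w_k$; the growth hypothesis together with $\|\tilde w_k\|_\infty\le C$ and the equi-integrability of $|D\tilde w_k|^p$ lets me replace this piece by $\int_{C_1}f(x_0,u(x_0),D\tilde w_k)\to \frac{d\mu}{d\cL^m}(x_0)$ at the cost of an error which is $o_\delta(1)$ as $k\to\infty$. On the third, $Dz_k$ is sheet-wise affine and the integral is $O(\delta)$. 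On $A_\delta$, the pointwise bound $|Dz_k^{j,l}|\le|D\tilde w_{k,l}^j|+|L_j|+C\delta^{-1}|\tilde w_{k,l}^j-a_j-L_j\cdot y|$ and the growth bound yield
\begin{equation*}
\int_{A_\delta}f(x_0,u(x_0),Dz_k)\le C\int_{A_\delta}|D\tilde w_k|^p+C\,\delta+\frac{C}{\delta^p}\int_{C_1}\cG^p(\tilde w_k,\tilde T),
\end{equation*}
whose first summand vanishes as $\delta\to 0$ uniformly in $k$ by equi-integrability and whose last summand vanishes as $k\to\infty$ for each fixed $\delta$. Letting $k\to\infty$ first and then choosing $\delta$ small in terms of $\gamma$ delivers \eqref{e:sc tilde}.

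The main obstacle I anticipate is ensuring that the gluing produces a genuine $Q$-valued map while retaining control of the energy: convex combinations of generic $Q$-points are ill-defined, but here the obstruction is bypassed because each affine target $a_j+L_j\cdot y$ has full multiplicity $q_j$ on its sheet, so the component-wise interpolation is insensitive to relabeling; and the energy cost in the thin annulus is absorbed precisely thanks to the equi-integrability of $|D\tilde w_k|^p$ on $C_1$ provided by Lemma \ref{l:equi mean}.
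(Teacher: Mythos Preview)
Your argument is correct and your blow-up $\tilde w_k$ coincides with the paper's $\zeta_k$; the difference lies only in the gluing step. The paper selects, via a Fubini--Fatou slicing argument, a radius $r\in(1-\delta,1)$ on which the boundary traces of $\zeta_k$ and $T_{x_0}u$ are well controlled, and then invokes the interpolation result \cite[Lemma~2.15]{DLSp1} as a black box to manufacture $z_k$ matching $\zeta_k$ on $C_r$ and $T_{x_0}u$ outside $C_{r(1+\eps)}$; the annular energy is then bounded by the trace estimate \eqref{e:r}, which only requires the uniform $L^p$ bound $\sup_k\int_{C_1}|D\zeta_k|^p<\infty$. You instead build the interpolant by hand through a sheet-wise convex combination with a fixed cutoff $\eta_\delta$, which is legitimate precisely because each affine target $q_j\a{a_j+L_j\cdot}$ has full multiplicity on its sheet (so the construction is insensitive to relabeling). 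Your route is more elementary in that it avoids the external interpolation lemma, but it pays for this by needing the stronger input that $(|D\tilde w_k|^p)$ is equi-integrable on $C_1$ to make $\int_{A_\delta}|D\tilde w_k|^p$ small uniformly in $k$; this is indeed available from \eqref{e:rad1} after change of variables (what you really use is De la Vall\'ee Poussin's criterion, since $\sup_k\int_{C_1}\varphi(|D\tilde w_k|^p)<\infty$, rather than Lemma~\ref{l:equi mean} per se). Two small points worth tightening: the derivative formula for $z_k$ should be justified via the chain rule for $Q$-valued maps (at a.e.\ point the first-order approximation of $\tilde w_k^j$ gives the claimed sheet-wise expression for $Dz_k^j$, and property~(ii) of the approximate differential is preserved because $\eta_\delta(y_0)(b_l-b_{l'})=0$ forces $b_l=b_{l'}$ wherever $\eta_\delta>0$); and your identification $\sum_{j,l}|\tilde w_{k,l}^j-a_j-L_jy|^2=\cG^2(\tilde w_k,\tilde T)$ holds for $k$ large because the sheets are separated by a fixed distance while $\cG(\tilde w_k^j,q_j\a{a_j})\le 2|Du|(x_0)$ uniformly.
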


Assuming the claim and testing the definition of quasiconvexity of $f(x_0,\cdot,\cdot)$
through the $z_k$'s, by \eqref{e:sc tilde}, we get
$$
f\big(x_0,u(x_0),Du(x_0)\big)\leq\limsup_{k\to+\infty}\int_{C_1}f\big(x_0,u(x_0),Dz_k\big)\leq\frac{d \mu}{d\cL^m}(x_0)+\gamma,
$$
which implies \eqref{e:sc density} by letting $\gamma\downarrow0$ and concludes the proof.

\begin{proof}[Proof of Claim \ref{cl:2}]
We consider the functions $w_k$ of Claim \ref{cl:1} and, since they have full multiplicity at $x_0$, we can blow-up.
Let $\zeta_k:=\sum_{j=1}^J\a{\zeta_k^j}$ with the maps $\zeta_k^j\in W^{1,\infty}(C_1,\I{q_j})$ defined by
$\zeta_k^j(y):=\tau_{-a_j}\big(\rho_k^{-1}\,\tau_{a_j}(w_k^j)(x_0+\rho_k\cdot)\big)(y)$, with $\tau_{-a_j}$ defined in \eqref{e:translation}.
Clearly, a simple change of variables gives
\begin{equation}\label{e:blow1}
\zeta_k^j\to q_j\a{a_j+L_j\cdot}\quad \text{in}\quad L^{p}(C_1,\I{q_j})
\end{equation}
and, by Claim $1$ (c),
\begin{equation}\label{e:blow2}
\lim_{k\to+\infty}\int_{C_{1}}f\big(x_0,u(x_0),D\zeta_k\big)
=\frac{d\mu}{d\cL^m}(x_0).
\end{equation}
Now, we modify the sequence $(\zeta_k)$
into a new sequence $(z_k)$
in order to satisfy the boundary conditions and \eqref{e:sc tilde}.
For every $\delta>0$, we find $r\in (1-\delta,1)$ such that
\begin{equation}\label{e:r}
\liminf_{k\to+\infty}\int_{\de C_r}|D \zeta_k|^p\leq \frac{C}{\delta}\quad\text{and}\quad
\lim_{k\to+\infty}\int_{\de C_r}\cG^p(\zeta_k,T_{x_0}u)=0.
\end{equation}
Indeed, by using Fatou's lemma, we have
\begin{align*}
\int_{1-\delta}^1\liminf_{k\to+\infty}\int_{\de C_s}|D\zeta_k|^pds\leq 
\liminf_{k\to+\infty}\int_{C_1\setminus C_{1-\delta}}|D\zeta_k|^p\leq C,\\
\int_{1-\delta}^1\lim_{k\to+\infty}\int_{\de C_s}\cG^p(\zeta_k,T_{x_0}u)ds\leq 
\liminf_{k\to+\infty}\int_{C_1\setminus C_{1-\delta}}\cG^p(\zeta_k,T_{x_0}u)\stackrel{\eqref{e:blow1}}{=}0,
\end{align*}
which together with the mean value theorem gives \eqref{e:r}.
Then we fix $\eps>0$ such that $r(1+\eps)<1$ and we apply the
interpolation result \cite[Lemma 2.15]{DLSp1} to infer the existence
of a function $z_k\in W^{1,\infty}(C_1,\Iq)$ such that
$z_k\vert_{C_r}=\zeta_k\vert_{C_r}$,
$z_k\vert_{C_1\setminus C_{r(1+\eps)}}=T_{x_0}u\vert_{C_1\setminus C_{r(1+\eps)}}$
and
\begin{align}\label{e:enrg raccordo}
\int_{C_{r(1+\eps)\setminus C_r}}|Dz_k|^p&\leq
C\,\eps\,r\left(\int_{\de C_r}|D\zeta_k|^p+
\int_{\de C_r}|DT_{x_0}u|^p\right)+\frac{C}{\eps\,r}
\int_{\de C_r}\cG^p(\zeta_k,T_{x_0}u)\notag\\
&\leq C\,\eps (1+\delta^{-1})+\frac{C}{\eps}\int_{\de C_r}\cG^p(\zeta_k,T_{x_0}u).
\end{align}
Therefore, by \eqref{e:enrg raccordo}, we infer
\begin{align*}
\int_{C_1}f\big(x_0,u(x_0),Dz_k\big)={}&
\int_{C_r}f\big(x_0,u(x_0),D\zeta_k\big)\\
&+\int_{C_{r(1+\eps)}\setminus C_r}\hspace{-0.5cm}f\big(x_0,u(x_0),Dz_k\big)
+\int_{C_1\setminus C_{r(1+\eps)}}\hspace{-0.5cm}
f\big(x_0,u(x_0),Du(x_0)\big)\\
\leq{}& \int_{C_1}f\big(x_0,u(x_0),D\zeta_k\big)+
C\,\eps (1+\delta^{-1})+\frac{C}{\eps}\int_{\de C_r}\cG^p(\zeta_k,T_{x_0}u)+C\delta.
\end{align*}
Choosing $\delta>0$ and  $\eps>0$ such that $C\,\eps
(1+\delta^{-1})+C\delta\leq \gamma$, 
and taking the superior limit as $k$ goes to $+\infty$ in the latter
inequality, we get \eqref{e:sc tilde} thanks to \eqref{e:blow2}
and \eqref{e:r}.
\end{proof}

\subsection{Necessity of quasiconvexity}
We now prove that, if $F$ is weak$^*$-$W^{1,\infty}$ lower semicontinuous, then $f(x_0,\cdot,\cdot)$ is $Q$-quasiconvex for every $x_0\in\Omega$.
Without loss of generality, assume $x_0=0$ and fix an affine $Q$-function $u$ and functions $w^j$ as in Definition \ref{d:qc}.
Set $z^{j}(y):=\sum_{i=1}^{q_j}\a{(w^j(y))_i-a_j-L_j\cdot y}$, so that $z^j\vert_{\de C_1}=q_j\a{0}$, and extend it by $C_1$-periodicity.

We consider $v_k^j(y)=\sum_{i=1}^{q_j}\a{k^{-1}(z^j(ky))_i+a_j+L_j\cdot y}$
and, for every $r>0$ such that $C_r\subseteq \Omega$, we define
$u_{k,r}(x)=\sum_{j=1}^J\tau_{(r-1)a_j}\left(r\,v_k^j\left(r^{-1}x\right)\right)$.
Note that:
\begin{itemize}
\item[(a)] for every $r$, $u_{k,r}\to u$ in $L^\infty(C_r,\Iq)$ as $k\to+\infty$;
\item[(b)] $u_{k,r}\vert_{\de C_r}=u\vert_{\de C_r}$ for every $k$ and $r$;
\item[(c)] for every $k$, $u_{k,r}(0)=\sum_{j=1}^J\tau_{-a_j}\left(r/k\,z^j(0)\right)\to u(0)$ as $r\to0$;
\item[(d)] for every $r$,
$\sup_{k}\norm{|Du_{k,r}|}{L^\infty(C_r)}<+\infty$, since
$$
|Du_{k,r}|^2(x)=\sum_{j=1}^J|Dv_k^j|^2\left(r^{-1}x\right)=
\sum_{j=1}^J\sum_{i=1}^{q_j}\left|Dz^j_i\left(k\,r^{-1}x\right)+L_j\right|^2.
$$
\end{itemize}
From (a) and (d) it follows that, for every $r$,
$u_{k,r}\weaks u$ in $W^{1,\infty}(C_r,\Iq)$ as $k\to+\infty$.
Then, by (b), setting $u_{k,r}=u$ on $\Omega\setminus C_r$,
the lower semicontinuity of $F$ implies that
\begin{equation}\label{e:loc}
F\big(u,C_r\big):=\int_{C_r}f\big(x, u, Du\big)\leq
\liminf_{k\to+\infty}F\big(u_{k,r},C_r\big).
\end{equation}
By the definition of $u_{k,r}$, changing the variables in \eqref{e:loc},
we get
\begin{multline}\label{e:loc2}
\int_{C_1}f\big(ry, \underbrace{a_1+r\,L_1\cdot y}_{q_1},
\ldots,\underbrace{a_J+r\,L_J\cdot y}_{q_J},
L_1,\ldots,L_J\big)dy\\
\leq\liminf_{k\to\infty}
\int_{C_1}f\big(ry, \tau_{(r-1)a_1}(r\,v_k^1(y)),\ldots,
\tau_{(r-1)a_J}(r\,v_k^J(y)),
Dv_k^1(y),\ldots,Dv_k^J(y)\big)dy.
\end{multline}
Noting that $\tau_{(r-1)a_j}(r\,v_k^j(y))\to q_j\a{a_j}$ in $L^\infty(C_1,\I{q_j})$
as $r$ tends to $0$
and $Dv_k^j(y)=\tau_{-L_j}(Dz^j(ky))$, \eqref{e:loc2} leads to
\begin{multline}\label{e:loc3}
f\big(0,\underbrace{a_1,\ldots,a_1}_{q_1},
\ldots,\underbrace{a_J,\ldots,a_J}_{q_J},
L_1,\ldots,L_J\big)\\
\leq\liminf_{k\to\infty}
\int_{C_1}f\big(0, \underbrace{a_1,\ldots,a_1}_{q_1},
\ldots,\underbrace{a_J,\ldots,a_J}_{q_J},
\tau_{-L_1}(Dz^1(ky)),\ldots,\tau_{-L_J}(Dz^J(ky))\big)dy.
\end{multline}
Using the periodicity of $z^j$, the integral on the right hand side of \eqref{e:loc3} equals
$$
\int_{C_1}f\big(0, \underbrace{a_1,\ldots,a_1}_{q_1},
\ldots,\underbrace{a_J,\ldots,a_J}_{q_J},
\tau_{-L_1}(Dz^1(y)),\ldots,\tau_{-L_J}(Dz^J(y))\big)dy.
$$
Since $\tau_{-L_j}(Dz^j)=Dw^j$, we conclude \eqref{e:qc}.

\section{Polyconvexity}\label{s:poly}
In this section we prove Theorem \ref{t:pc=>qc} and show the semicontinuity of Almgren's Dirichlet energy and Mattila's quadratic energies.
Recall the notation for multi-indices and minors $M_{\alpha,\beta}$ introduced in Section \ref{s:prel}.

\begin{definition}\label{d:poliaffine}
A map $P: \R{n\times m} \to \R{}$ is polyaffine if there are constants $c_0, c^l_{\alpha\beta}$,
for $l\in \{1, \ldots, N\}$ and $\alpha, \beta$ multi-indices, such that
\begin{equation}\label{e:poliaffine}
P (A) = c_0 + \sum_{l=1}^N\; \sum_{|\alpha|=|\beta|=l}
c^l_{\alpha \beta}\, M_{\alpha\beta} (A)= c_0+\langle \zeta,M(A)\rangle,
\end{equation}
where $\zeta\in\R{\tau(m,n)}$ is the vector whose entries are the $c^l_{\alpha\beta}$'s and $M(A)$ is the vector of all minors.
\end{definition}

It is possible to represent polyconvex functions as supremum of a family of polyaffine functions
retaining some symmetries from the invariance of $f$ under the action of permutations.

\begin{propos}\label{p:chrctrztn}
Let $f$ be a $Q$-integrand, then the following are equivalent:
\begin{itemize}
\item[(i)] $f$ is a polyconvex  $Q$-integrand,
\item[(ii)] for every choice of vectors $a_1, \ldots, a_Q \in \R{n}$ and matrices $A_1, \ldots A_Q \in \R{n\times m}$,
with $A_i=A_j$ if $a_i=a_j$, there exist polyaffine functions $P_j:\R{n\times m}\to\R{}$, with $P_i=P_j$ if $a_i=a_j$, such that
\begin{equation}\label{e:touch}
f \big(a_1, \ldots, a_Q,A_1, \ldots, A_Q\big)= \sum_{j=1}^Q P_j(A_j),
\end{equation}
and
\begin{equation}\label{e:above}
f \big(a_1, \ldots, a_Q,L_1, \ldots, L_Q\big)\geq
\sum_{j=1}^Q P_j(L_j)\quad\text{for every } L_1, \ldots, L_Q\in \R{n\times m}.
\end{equation}
\end{itemize}
\end{propos}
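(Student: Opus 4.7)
The plan is to prove the equivalence in two directions separately. I expect $(i)\Rightarrow(ii)$ to be the substantive direction, carried out via a symmetrization argument applied to subgradients of the convex function $g(a,\cdot)$. The converse will follow from a standard supremum-of-affine-minorants construction, with the only subtlety being the non-compatible case at the very end.

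For $(i)\Rightarrow(ii)$, I start from $g$ convex in $X$ with $f(a,L)=g(a,M(L))$, and first replace $g$ by its average
\[
\tilde g(a, X_1, \ldots, X_Q) := \frac{1}{|G_a|}\sum_{\pi\in G_a} g\big(a, X_{\pi(1)},\ldots,X_{\pi(Q)}\big)
\]
over the stabilizer subgroup $G_a := \{\pi\in\Pe_Q : a_{\pi(j)}=a_j \text{ for every } j\}$. By the $Q$-invariance of $f$ one still has $\tilde g(a,M(L))=f(a,L)$, while $\tilde g(a,\cdot)$ is convex in $X$ and, by construction, $G_a$-invariant. At a compatible base point $(a,A)$ with $A_i=A_j$ when $a_i=a_j$, the vector $M(A)\in(\R{\tau(m,n)})^Q$ is $G_a$-invariant; choosing a subgradient $\eta=(\eta_1,\ldots,\eta_Q)$ of $\tilde g(a,\cdot)$ at $M(A)$, the $G_a$-invariance of both $\tilde g(a,\cdot)$ and $M(A)$ forces $(\eta_{\pi^{-1}(1)},\ldots,\eta_{\pi^{-1}(Q)})$ to be a subgradient at $M(A)$ for every $\pi\in G_a$. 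Averaging yields $\bar\eta_j := |G_a|^{-1}\sum_{\pi\in G_a}\eta_{\pi(j)}$, still a subgradient, and since $G_a$ acts transitively on each $a$-equivalence class one has $\bar\eta_i=\bar\eta_j$ whenever $a_i=a_j$. Finally,
\[
P_j(L) := \langle \bar\eta_j, M(L)\rangle + \frac{1}{Q}\bigg(f(a,A) - \sum_{k=1}^Q \langle \bar\eta_k, M(A_k)\rangle\bigg)
\]
defines a polyaffine family with $P_i=P_j$ when $a_i=a_j$, and \eqref{e:touch} is immediate while \eqref{e:above} follows from the subgradient inequality combined with $\tilde g\circ M=f$.

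For $(ii)\Rightarrow(i)$, I define the natural candidate
\[
g(a, X) := \sup\bigg\{c+\sum_{j=1}^Q\langle \zeta_j, X_j\rangle :\ c\in\R,\ \zeta_j\in\R{\tau(m,n)},\ c + \sum_{j=1}^Q \langle \zeta_j, M(L_j)\rangle \leq f(a,L)\ \forall L\bigg\},
\]
which is convex in $X$ as a supremum of affine functionals, and satisfies $g(a, M(L))\leq f(a,L)$. Applying (ii) with $A = L$ at a compatible $L$ furnishes an admissible pair $(c,\zeta)$ (with $\zeta_i=\zeta_j$ when $a_i=a_j$) attaining equality, so $g(a,M(L))=f(a,L)$ holds at every compatible $L$. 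The main obstacle is upgrading this equality to \emph{every} $L$, as Definition~\ref{d:pc} demands: since the $1$-minor block of $M$ recovers the matrix itself, $M$ is injective and $f\circ M^{-1}$ is well-defined on the image of $M$, reducing the question to producing an affine support for this function at each point of that image. The symmetric supports supplied by (ii) live only at the $G_a$-invariant points, so handling a non-compatible $L$ requires manufacturing non-symmetric affine supports at $M(L)$. This is resolved by perturbing the $a$-pattern so that all entries become distinct (rendering the compatibility constraint vacuous), applying (ii) in the perturbed situation to get a polyaffine family without symmetry constraints, and passing to the limit using the $Q$-invariance of $f$ together with its continuity/growth properties.
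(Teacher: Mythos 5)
Your direction (i)$\Rightarrow$(ii) is correct and is essentially the argument of the paper: the authors also produce a subgradient $\zeta$ of $g(a_1,\ldots,a_Q,\cdot)$ at $\big(M(A_1),\ldots,M(A_Q)\big)$ with $\zeta_i=\zeta_j$ whenever $a_i=a_j$, and then define the $P_j$ exactly as you do (cp.\ \eqref{e:P}); your averaging of $g$ over the stabilizer of $a$ and of the subgradients over the same group is just a more explicit derivation of that symmetric subgradient, and \eqref{e:touch}, \eqref{e:above} follow as you indicate.

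The genuine gap is in (ii)$\Rightarrow$(i), precisely at the step you flag and then only sketch. Your $g(a,\cdot)$ -- like the supremum used in the paper -- is touched by $f(a,\cdot)$ only at compatible tuples ($L_i=L_j$ whenever $a_i=a_j$), and your proposed repair (perturb $a$ so that all entries are distinct, apply (ii) there, and pass to the limit ``using the continuity/growth properties of $f$'') invokes hypotheses that the proposition does not grant: a $Q$-integrand is merely measurable and permutation-invariant (even Theorem~\ref{t:pc=>qc} assumes only local boundedness), and no continuity in the $a$-variable is available, while your perturbation is performed exactly in $a$. Moreover, even granting continuity, extracting a limiting affine support would require uniform bounds on the coefficients of the perturbed polyaffine functions, which you do not provide. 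That this step cannot be repaired with the tools you invoke is shown by $Q=2$, $m=n=1$, $f(a_1,a_2,L_1,L_2):=\chi_{\{a_1=a_2\}}\min\{(L_1-L_2)^2,1\}$: condition (ii) holds (take $P_1=P_2\equiv 0$ at every admissible base point), the perturbed integrands $f(\tilde a_1,\tilde a_2,\cdot,\cdot)$ with $\tilde a_1\neq\tilde a_2$ vanish identically and so carry no information in the limit, and yet $f(a,a,\cdot,\cdot)$ is not a convex function of $(M(L_1),M(L_2))=(L_1,L_2)$, so no limiting argument of this kind can produce the supports needed for \eqref{e:char-cvx} at non-compatible pairs. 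Note also that this perturbation is a departure from the paper, whose proof of (ii)$\Rightarrow$(i) contains no such step: there $g(a,\cdot)$ is defined as the supremum of the affine functions furnished by (ii) over compatible base points and \eqref{e:char-cvx} is deduced directly from \eqref{e:above2}, which as written yields $g\big(a,M(L)\big)=f(a,L)$ only at compatible $L$. So the difficulty you identified is a real one, but the route you propose does not close it under the stated hypotheses.
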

\begin{proof} \noindent\textit{(i)$\Rightarrow$(ii).} 
Let $g$ be a function representing $f$ according to Definition~\ref{d:pc}.
Convexity of the subdifferential of $g(a_1, \ldots, a_Q,\cdot)$, condition \eqref{e:char-cvx} and the invariance of $f$ under the
action of permutations yield that there exists $\zeta\in\de g\big(a_1, \ldots, a_Q,M(A_1), \ldots, M(A_Q)\big)$,
with $\zeta_i=\zeta_j$ if $a_i=a_j$, such that for every $X\in (\R{\tau(m,n)})^Q$ we have
\begin{equation}\label{e:hcxvty}
g(a_1, \ldots, a_Q,X_1,\ldots,X_Q)\geq 
g\big(a_1, \ldots, a_Q,M(A_1), \ldots, M(A_Q)\big)
+\sum_{j=1}^Q
\langle \zeta_j,X_j-M(A_j)\rangle.
\end{equation}
Hence, the maps $P_j:\R{n\times m}\to\R{}$ given by
\begin{equation}\label{e:P}
P_j(L):=Q^{-1} g\big(a_1, \ldots, a_Q,M(A_1), \ldots, M(A_Q)\big)+\langle \zeta_j,M(L)-M(A_j)\rangle
\end{equation}
are polyaffine and such that \eqref{e:touch} and \eqref{e:above} follow.

\noindent\textit{(ii)$\Rightarrow$(i).}
By \eqref{e:touch} and \eqref{e:above}, there exists $\zeta_j$, satisfying $\zeta_i=\zeta_j$ if $a_i=a_j$, such that
\begin{eqnarray}\label{e:above2}
f \big(a_1, \ldots, a_Q, L_1, \ldots, L_Q\big)\geq
f \big(a_1, \ldots, a_Q,A_1,\ldots,A_Q\big)
+\sum_{j=1}^Q\langle\zeta_j, M(L_j)-M(A_j)\rangle.
\end{eqnarray}
Then setting,
\begin{multline*}
g\big(a_1, \ldots, a_Q,X_1, \ldots, X_Q\big):=
\sup\Bigl\{ f\big(a_1, \ldots, a_Q,A_1, \ldots, A_Q\big)
+\sum_{j=1}^Q\langle\zeta_j, X_j-M(A_j)\rangle\Bigr\}
\end{multline*}
where the supremum is taken over all $A_1, \ldots, A_Q\in \R{n\times m}$ with $A_i=A_j$ if $a_i=a_j$,
it follows clearly that $g\big(a_1, \ldots, a_Q,\cdot\big)$ is a convex function and \eqref{e:char-cvx} holds thanks to \eqref{e:above2}.
In turn, these remarks and the equality  
$\textrm{co}\big((M(\R{n\times m}))^Q\big)=(\R{\tau(m,n)})^Q$ 
imply that $g\big(a_1, \ldots, a_Q,\cdot\big)$ is everywhere finite.
\end{proof}

We are now ready for the proof of Theorem \ref{t:pc=>qc}.
\begin{proof}[Proof of Theorem \ref{t:pc=>qc}]
Assume that $f$ is a polyconvex $Q$-integrand and consider $a_j, L_j$ and $w^j$ as in Definition \ref{d:qc}.
Corresponding to this choice, by Proposition \ref{p:chrctrztn}, there exist 
polyaffine functions $P_j$ satisfying \eqref{e:touch} and \eqref{e:above},
which read as
\begin{equation}\label{e:touch bis}
f \big(\underbrace{a_1, \ldots, a_1}_{q_1},\ldots, \underbrace{a_J, \ldots, a_J}_{q_J},
\underbrace{L_1, \ldots, L_1}_{q_1},\ldots\underbrace{L_J, \ldots, L_J}_{q_J}\big)
= \sum_{j=1}^J q_j P_j (L_j)
\end{equation}
and, for every $B_1, \ldots, B_Q\in \R{m\times n}$,
\begin{equation}\label{e:above bis}
f (\underbrace{a_1, \ldots, a_1}_{q_1},
\ldots, \underbrace{a_J, \ldots, a_J}_{q_J},
B_1, \ldots, B_Q)\geq
\sum_{j=1}^J \left\{\sum_{i=\sum_{l<j} q_l +1}^{\sum_{l\leq j}
q_l} P_j (B_i)\right\}.
\end{equation}

To prove the theorem it is enough to show that
\begin{equation}\label{e:scpoly}
\sum_{j=1}^Jq_j\,P_j(L_j)= \int_{C_1}
\sum_{j=1}^J \sum_{i=1}^{q_j} P_j (Dw^j_i).
\end{equation}
Indeed, then the quasiconvexity of $f$ follows easily from
\begin{multline*}
f \big(\underbrace{a_1, \ldots, a_1}_{q_1},
\ldots, \underbrace{a_J, \ldots, a_J}_{q_J},
\underbrace{L_1, \ldots, L_1}_{q_1},
\ldots\underbrace{L_J, \ldots, L_J}_{q_J}\big)\stackrel{\eqref{e:touch}}{=}
\sum_{j=1}^Jq_j\,P_j(L_j)\\
\stackrel{\eqref{e:scpoly}}{=} \int_{C_1}
\sum_{j=1}^J \sum_{i=1}^{q_j} P_j (Dw^j_i)\stackrel{\eqref{e:above}}{\leq}
\int_{C_1}f \big(\underbrace{a_1, \ldots, a_1}_{q_1},
\ldots, \underbrace{a_J, \ldots, a_J}_{q_J},
Dw^1,\ldots,Dw^J\big).
\end{multline*}

To prove \eqref{e:scpoly}, consider the current $T_{w^j,C_1}$
associated to the graph of the $q_j$-valued map $w^j$ and note that, by definition \eqref{e:Tu minori}, 
for the exact, constant coefficient $m$-form
$d\omega^j=c_0^j\,dx+\sum_{l=1}^N\sum_{|\alpha|=|\beta|=l} \sigma_\alpha\, c^{j,l}_{\alpha\beta}\, dx_{\bar\alpha}\wedge dy_\beta$,
it holds
\begin{equation}\label{e:int parti}
\int_{C_1} \sum_{i=1}^{q_j} P_j (Dw^j_i)=\la T_{w^j,C_1},d\omega^j\ra,
\end{equation}
where $P_j (A) = c_0^j + \sum_{l=1}^N\sum_{|\alpha|=|\beta|=l}c^{j,l}_{\alpha\beta}\, M_{\alpha\beta} (A)$.

Since $u\vert_{\de C_1}=w\vert_{\de C_1}$, from Theorem \ref{t:de Tf} it follows that $\de T_{w,C_1}=\de T_{u, C_1}$.
Then, \eqref{e:scpoly} is an easy consequence of \eqref{e:int parti}: for $u^j(x)=q_j\a{a_j+L_j\cdot x}$, one has, indeed,
\begin{align*}
\sum_{j=1}^Jq_j\,P_j(L_j) &=
\int_{C_1} \sum_{j=1}^J\sum_{i=1}^{q_j}P_j (Du^j_i)=\sum_{j=1}^J\la T_{u^j,C_1},d\omega^j\ra=\sum_{j=1}^J\la \de T_{u^j,C_1},\omega^j\ra\\
&=\sum_{j=1}^J\la \de T_{w^j,C_1},\omega^j\ra=
\sum_{j=1}^J\la T_{w^j,C_1},d\omega^j\ra=
\int_{C_1}\sum_{j=1}^J \sum_{i=1}^{q_j} P_j (Dw^j_i).
\end{align*}
This finishes the proof.
\end{proof}

Explicit examples of polyconvex functions are collected below (the elementary proof is left to the reader).
\begin{propos}\label{p:ex}
The following class of functions are polyconvex $Q$-integrands:
\begin{itemize}
\item[(a)] $f (a_1, \ldots, a_Q, L_1, \ldots, L_Q):=g\big(\cG(L,Q\a{0})\big)$
with $g:\R{}\to\R{}$ convex and increasing;
\item[(b)] $f (a_1, \ldots, a_Q, L_1, \ldots, L_Q):=\sum_{i,j=1}^Qg(L_i-L_j)$ 
with $g:\R{n\times m}\to\R{}$ convex;
\item[(c)] $f (a_1, \ldots, a_Q, L_1, \ldots, L_Q):=
\sum_{i=1}^Q g (a_i, L_i)$ with $g: \R{m}\times \R{n\times m}\to \R{}$ measurable and polyconvex.
\end{itemize}
\end{propos}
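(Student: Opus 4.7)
The plan is to verify each case by exhibiting, for the given $f$, an explicit function of $(a_1,\ldots,a_Q,X_1,\ldots,X_Q)\in(\R{n})^Q\times(\R{\tau(m,n)})^Q$ which is convex in the $X$-variables for every fixed $(a_1,\ldots,a_Q)$ and which reproduces $f$ upon composition with $M$. The key observation underlying every case is that if $\pi:\R{\tau(m,n)}\to\R{n\times m}$ denotes the natural projection onto the first block (so that $\pi\circ M=\Id$), then any convex function on $\R{n\times m}$ yields, via precomposition with $\pi$, a convex function on $\R{\tau(m,n)}$; and any affine map into matrices followed by a convex function remains convex.

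For case (a), note that $\cG(L,Q\a{0})^2=\sum_{i=1}^Q|L_i|^2$, so the candidate is
\[
h(a_1,\ldots,a_Q,X_1,\ldots,X_Q):=g\Bigl(\sqrt{\textstyle\sum_i|\pi(X_i)|^2}\,\Bigr).
\]
The inner quantity is a norm of $(\pi(X_1),\ldots,\pi(X_Q))$, hence convex and nonnegative, and composing with the nondecreasing convex $g$ preserves convexity. Condition (ii) of Definition~\ref{d:pc} is then immediate from $\pi(M(L_i))=L_i$.

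For case (b), the candidate is
\[
h(a_1,\ldots,a_Q,X_1,\ldots,X_Q):=\sum_{i,j=1}^Q g\bigl(\pi(X_i)-\pi(X_j)\bigr),
\]
which is convex in $X$ as a finite sum of convex functions composed with the affine maps $(X_1,\ldots,X_Q)\mapsto\pi(X_i)-\pi(X_j)$. For case (c), the hypothesis on $g$ furnishes $\tilde g:\R{m}\times\R{\tau(m,n)}\to\R{}$ with $\tilde g(a,\cdot)$ convex and $g(a,L)=\tilde g(a,M(L))$, so one simply takes
\[
h(a_1,\ldots,a_Q,X_1,\ldots,X_Q):=\sum_{i=1}^Q\tilde g(a_i,X_i).
\]

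In all three cases, the required symmetry of $f$ under joint permutations of the pairs $(a_i,L_i)$ is evident by inspection. I do not anticipate any real obstacle; the argument merely unwinds the definitions, with the only minor subtlety being the use of the projection $\pi$ to promote a convex function of a matrix to a convex function of the full minor-vector.
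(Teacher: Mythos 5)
Your proposal is correct: the paper explicitly leaves this proof to the reader, and your construction (precomposing with the linear projection $\pi$ onto the first block of the minor vector, resp.\ using the classical polyconvex representative $\tilde g$ in case (c)) is exactly the elementary verification of Definition~\ref{d:pc} that the authors intend, with the convexity in the $X$-variables and the identity $\pi\circ M=\Id$ giving conditions (i) and (ii) directly.
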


\begin{remark}\label{r: mat3}
Consider as in Remark \ref{r: mat} a linear symmetric map $\R{n\times m}\ni M\mapsto A\, M\in\R{n\times m}$.
As it is well-known, for classical single valued functions the functional
\begin{equation*}
\int\langle A\,Df,Df\rangle
\end{equation*}
is quasiconvex if and only if it is rank-$1$ convex.
If $\min\{m,n\}\leq 2$, quasiconvexity is equivalent to polyconvexity as well (see \cite{Te}).
Hence, in this case, by Theorem \ref{t:pc=>qc}, every $1$-semielliptic integrand is quasiconvex and therefore $Q$-semielliptic.

We stress that for $\min\{m,n\}\geq3$ there exist $1$-semielliptic integrands which are not polyconvex (see always \cite{Te}).
\end{remark}

\appendix

\section{Equi-integrability}
Let us first recall some definitions and introduce some notation.
As usual, in the following $\Omega\subset\R{m}$ denotes a Lipschitz set with finite measure.

\begin{definition}\label{d:equi-int}
A sequence $(g_k)$ in $L^1(\Omega)$ is \emph{equi-integrable} if one of the following equivalent conditions holds:
\begin{itemize}
 \item[(a)] for every $\eps>0$ there exists $\delta>0$ such that, for every $\cL^m$-measurable set $E\subseteq\Omega$ with
$\cL^m(E)\leq\delta$, we have $\sup_k\int_{E}|g_k|\leq\eps$;
 \item[(b)] the distribution functions $\varphi_k(t):=\int_{\{|g_k|\geq t\}}|g_k|$ satisfy $\lim_{t\to+\infty}\sup_k\varphi_k(t)=0$;
 \item[(c)] (De la Vall\'ee Poissin's criterion) if there exists a Borel function $\varphi:[0,+\infty)\to[0,+\infty]$ 
such that
\begin{equation}
 \lim_{t\to+\infty}\frac{\varphi(t)}t=+\infty\,\text{ and }\, \sup_k\int_{\Om}\varphi(|g_k|)dx<+\infty.
\end{equation}
\end{itemize}
\end{definition}
Note that, since $\Omega$ has finite measure, an equi-integrable sequence is also equi-bounded.
We prove now Chacon's biting lemma.

\begin{lemma}\label{l:BM}
Let $(g_k)$ be a bounded sequence in $L^1(\Omega)$. Then, there
exist a subsequence $(k_j)$ and a sequence $(t_j)\subset[0,+\infty)$
with $t_j\to+\infty$ such that $(g_{k_j}\vee(-t_j)\wedge t_j)$ is equi-integrable.
\end{lemma}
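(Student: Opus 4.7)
The plan is to exploit the distribution function $\phi_k(t):=\int_{\{|g_k|>t\}}|g_k|\,d\cL^m$. Each $\phi_k$ is non-increasing, right-continuous, bounded on $[0,+\infty)$ by $M:=\sup_k\|g_k\|_{L^1}<+\infty$, and vanishes at $+\infty$ because $g_k\in L^1$. Helly's selection theorem then produces a subsequence (which I will not relabel) along which $\phi_k(t)\to\phi(t)$ at every continuity point of a non-increasing limit $\phi:[0,+\infty)\to[0,M]$; set $\ell:=\lim_{t\to+\infty}\phi(t)\in[0,M]$, a number which measures the $L^1$ mass of $(|g_k|)$ escaping to infinity.

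A direct layer-cake computation (splitting $\{|g_j|>s\}$ into $\{s<|g_j|\le t_j\}$ and $\{|g_j|>t_j\}$) yields the driving identity: for $f_j:=g_j\vee(-t_j)\wedge t_j$ and any $s<t_j$,
\[
\int_{\{|f_j|>s\}}|f_j|\,d\cL^m\;=\;\phi_j(s)\;-\;\bigl[\phi_j(t_j)-t_j\,\cL^m(\{|g_j|>t_j\})\bigr],
\]
while the left-hand side vanishes for $s\ge t_j$. The strategy is to select $t_j\to+\infty$ so that $\phi_j(t_j)\to\ell$ and $t_j\,\cL^m(\{|g_j|>t_j\})\to 0$; then the bracketed ``excess'' tends to $\ell$, and for large $s$ the distribution of $|f_j|$ above $s$ tends to $\phi(s)-\ell$, which decays to $0$ as $s\to+\infty$. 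By criterion (b) of Definition \ref{d:equi-int} this gives equi-integrability of $(f_j)$.

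To force $\phi_j(t_j)\to\ell$, I would diagonalize: pick continuity points $T_n\nearrow+\infty$ of $\phi$ with $\phi(T_n)<\ell+1/n$, then integers $J_n$ strictly increasing with $|\phi_j(T_n)-\phi(T_n)|<1/n$ for $j\ge J_n$, and set $t_j:=T_n$ for $J_n\le j<J_{n+1}$; the monotonicity of $\phi_j$ together with $\phi_j(t_j)\ge\phi_j(T_{n+1})\to\phi(T_{n+1})\ge\ell$ then forces $\phi_j(t_j)\to\ell$.

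The main obstacle is the companion requirement $t_j\,\cL^m(\{|g_j|>t_j\})\to 0$, because the universal bound $t_j\,\cL^m(\cdot)\le\phi_j(t_j)$ only ensures boundedness by $\ell+o(1)$. To kill the residual $\ell$, one exploits that on the plateau of $t$ values where $\phi_j(t)$ stays close to $\ell$, the mass contributing to $\phi_j(t)$ sits at heights much larger than $t$ (Chebyshev gives $\cL^m(\{|g_j|>t\})\le M/t$ and the relevant mass concentrates on this thin set); by shifting $t_j$ toward the lower endpoint of that plateau---and passing to a further subsequence of $(g_j)$ if necessary, so that the plateau widens sufficiently fast---the product $t_j\,\cL^m(\{|g_j|>t_j\})$ is driven to $0$, which completes the argument.
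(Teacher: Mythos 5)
Your approach is genuinely different from the paper's. The paper truncates directly ($h_k^j := g_k \wedge j$), extracts weak-$*$ $L^\infty$ limits $f_j$ of $h_k^j$ (for each fixed $j$), observes that $(f_j)$ is monotone and $L^1$-bounded hence $L^1$-convergent to some $f$, and then picks $k_j$ so that $h_{k_j}^j$ converges weakly in $L^1$ to $f$; equi-integrability is then automatic. Your route through distribution functions $\phi_k$, Helly's selection theorem, and the layer-cake identity
\[
\int_{\{|f_j|>s\}}|f_j| \;=\; \phi_j(s)-\phi_j(t_j)+t_j\,\cL^m(\{|g_j|>t_j\})\qquad(s<t_j)
\]
is a perfectly legitimate alternative, the identity is correct, and you correctly reduce matters to producing $t_j\to\infty$ with $\phi_j(t_j)\to\ell$ and $t_j\,\cL^m(\{|g_j|>t_j\})\to 0$. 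The first diagonal argument (picking $t_j=T_n$ on blocks $[J_n,J_{n+1})$) does give $\phi_j(t_j)\to\ell$, as you say.

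The last paragraph, however, is a gap: it names the obstacle correctly but does not resolve it. Neither ``shifting $t_j$ toward the lower endpoint of the plateau'' nor ``passing to a further subsequence so the plateau widens fast enough'' is an argument, and as stated they do not obviously force $t_j\,\cL^m(\{|g_j|>t_j\})\to 0$ (the naive bound by $\phi_j(t_j)$ only gives $\ell+o(1)$, as you note). The gap can be filled, but the mechanism is different from what you describe. The key quantitative input is: $\int_T^S \cL^m(\{|g_j|>t\})\,dt\le \|g_j\|_{L^1}\le M$, so if $t\,\cL^m(\{|g_j|>t\})\ge\delta$ on all of $[T,S]$ then $M\ge\delta\int_T^S dt/t=\delta\ln(S/T)$; hence $\inf_{t\in[T,S]}t\,\cL^m(\{|g_j|>t\})\le M/\ln(S/T)$. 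Now fix continuity points $T_n<S_n$ of $\phi$ with $T_n\to\infty$, $\phi(T_n)<\ell+2^{-n}$, and $\ln(S_n/T_n)\to\infty$; since $\phi\ge\ell$ everywhere, for $j$ large (say $j\ge J_n$) one has $\phi_j(T_n)<\ell+2^{-n+1}$ and $\phi_j(S_n)>\ell-2^{-n}$, so on $[T_n,S_n]$ the value $\phi_j$ stays within $2^{-n+1}$ of $\ell$, and one can pick $t_j\in[T_n,S_n]$ with $t_j\,\cL^m(\{|g_j|>t_j\})\le M/\ln(S_n/T_n)$. Setting $t_j$ this way for $j\in[J_n,J_{n+1})$ gives all three requirements simultaneously, with no further subsequence extraction. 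With this repair, your proof is complete; without it, the decisive step is only asserted.
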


\begin{proof}
Without loss of generality, assume $g_k\geq0$ and consider for every $j\in\N$ the functions $h_k^j:=g_k\wedge j$.
Since, for every $j$, $(h_k^j)_k$ is equi-bounded in $L^\infty$, up to passing to a subsequence (not relabeled),
there exists the $L^\infty$ weak* limit $f_j$ of $h_k^j$ for every $j$.
Clearly the limits $f_j$ have the following properties:
\begin{itemize}
\item[(a)] $f_j\leq f_{j+1}$ for every $j$ (since $h_k^j\leq h_k^{j+1}$ for every $k$);
\item[(b)] $\norm{f_j}{L^1}=\lim_k\norm{h_k^j}{L^1}$;
\item[(c)] $\sup_j \norm{f_j}{L^1}= \sup_j \lim_k\norm{h_k^j}{L^1}\leq \sup_k\norm{g_k}{L^1}<+\infty$.
\end{itemize}
By the Lebesgue monotone convergence theorem, (a) and (c), it follows that $(f_j)$ converges in $L^1$ to a function $f$.
Moreover, from (b), for every $j$ we can find a $k_j$ such that 
\begin{equation}\label{e:mon2}
\left|\int h_{k_j}^j-\int f_j\right|\leq j^{-1}.
\end{equation}
We claim that $h_{k_j}^j=g_{k_j}\wedge j$ fulfills the conclusion of the lemma (with $t_j=j$).
To see this, it is enough to show that $h_{k_j}^j$ weakly converges to $f$ in $L^1$, from which the equi-integrability follows.
Let $a\in L^\infty$ be a test function. Since $h_{k_j}^l\leq h_{k_j}^j$ for $l\leq j$, we have that
\begin{equation}\label{e:mon}
\int\big(\norm{a}{L^\infty}-a\big)h_{k_j}^l\leq \int\big(\norm{a}{L^\infty}-a\big)h_{k_j}^j.
\end{equation}
Taking the limit as $j$ goes to infinity in \eqref{e:mon}, we obtain (by $h_{k_j}^l\stackrel{w^*\hbox{-}L^\infty}{\to} f_l$
and \eqref{e:mon2}) 
\begin{equation*}
\int\big(\norm{a}{L^\infty}-a\big)f_l \leq \norm{a}{L^\infty}\int f-\limsup_j\int a\, h_{k_j}^j.
\end{equation*}
From which, passing to the limit in $l$,  we conclude since $f_l\stackrel{L^1}{\to} f$
\begin{equation}\label{e:mon3}
\limsup_j\int a\, h_{k_j}^j\leq \int a f.
\end{equation}
Using $-a$ in place of $a$, one obtains as well the inequality
\begin{equation}\label{e:mon4}
\int a f\leq \liminf_j\int a\, h_{k_j}^j.
\end{equation}
\eqref{e:mon3} and \eqref{e:mon4} together concludes the proof of the weak convergence of $h_{k_j}^j$ to $f$ in $L^1$.
\end{proof}

Next we show that concentration effects for critical Sobolev embedding
do not show up if equi-integrability of functions and gradients is assumed.
\begin{lemma}\label{l:equipstar}
Let $p\in[1,m)$ and $(g_k)\subset W^{1,p}(\Om)$ be such that 
$(|g_k|^p)$ and $(|\nabla g_k|^p)$ are both equi-integrable, then 
$(|g_k|^{p^*})$ is equi-integrable as well.
\end{lemma}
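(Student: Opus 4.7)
The plan is to use a truncation-at-level-$t$ argument combined with the Sobolev embedding $W^{1,p}(\Omega)\hookrightarrow L^{p^*}(\Omega)$. For every $t>0$ I would set $h_k^t := (|g_k|-t)_+$, so that $h_k^t\in W^{1,p}(\Omega)$ with $|\nabla h_k^t|\leq |\nabla g_k|\,\chi_{\{|g_k|>t\}}$, while on $\{|g_k|\geq 2t\}$ one has $h_k^t\geq |g_k|/2$. Therefore
$$
\int_{\{|g_k|\geq 2t\}}|g_k|^{p^*}\,dx\;\leq\; 2^{p^*}\int_\Omega (h_k^t)^{p^*}\,dx,
$$
and the task reduces to showing that $\|h_k^t\|_{L^{p^*}}$ is small uniformly in $k$ when $t$ is large.

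The first key step is to show that $\|h_k^t\|_{L^p}$ and $\|\nabla h_k^t\|_{L^p}$ both tend to $0$ as $t\to\infty$, uniformly in $k$. The bound $\|h_k^t\|_{L^p}^p\leq \int_{\{|g_k|>t\}}|g_k|^p$ is $o(1)$ uniformly in $k$ by equi-integrability of $(|g_k|^p)$ and criterion (b) of Definition \ref{d:equi-int}. On the other hand, from $\cL^m(\{|g_k|>t\})\leq t^{-p}\sup_k\|g_k\|_{L^p}^p$ and the equi-integrability of $(|\nabla g_k|^p)$ via criterion (a), the quantity $\|\nabla h_k^t\|_{L^p}^p=\int_{\{|g_k|>t\}}|\nabla g_k|^p$ also vanishes uniformly in $k$. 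Applying the Sobolev inequality
$$
\|h_k^t\|_{L^{p^*}(\Omega)}\;\leq\; C_\Omega\bigl(\|h_k^t\|_{L^p(\Omega)}+\|\nabla h_k^t\|_{L^p(\Omega)}\bigr)
$$
and raising to the $p^*$-th power, I conclude that $\sup_k\int_{\{|g_k|\geq 2t\}}|g_k|^{p^*}\to 0$ as $t\to\infty$.

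To deduce equi-integrability of $(|g_k|^{p^*})$, fix $\eps>0$, choose $t$ with $\sup_k\int_{\{|g_k|\geq 2t\}}|g_k|^{p^*}<\eps/2$, and then $\delta>0$ with $(2t)^{p^*}\delta<\eps/2$. For any measurable $E\subseteq \Omega$ with $\cL^m(E)<\delta$, splitting according to whether $|g_k|<2t$ or $|g_k|\geq 2t$ gives
$$
\int_E|g_k|^{p^*}\,dx\;\leq\; (2t)^{p^*}\,\cL^m(E)+\int_{\{|g_k|\geq 2t\}}|g_k|^{p^*}\,dx\;<\;\eps,
$$
uniformly in $k$, which is precisely criterion (a) of Definition \ref{d:equi-int}.

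The argument is essentially routine; the point requiring care is that the Sobolev embedding is applied to $h_k^t\in W^{1,p}(\Omega)$ rather than $W^{1,p}_0(\Omega)$, so the right-hand side contains the $L^p$ norm of $h_k^t$ itself, not just that of $\nabla h_k^t$. This is exactly why the hypothesis of equi-integrability of $(|g_k|^p)$ must be combined with that of $(|\nabla g_k|^p)$: without control on the $L^p$ tail of $g_k$, the $L^p$ norm of $h_k^t$ could fail to vanish uniformly as $t\to\infty$, and the Sobolev-embedding step would break down.
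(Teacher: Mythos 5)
Your proof is correct and follows essentially the same strategy as the paper: truncate $g_k$ at a level $t$ (the paper uses the signed truncation $g_k - (g_k\vee(-t)\wedge t)$, you use $(|g_k|-t)_+$, which has the same absolute value), apply the $W^{1,p}\hookrightarrow L^{p^*}$ embedding to the tail, and use equi-integrability of $(|g_k|^p)$ and $(|\nabla g_k|^p)$ together with a Chebychev bound on $\cL^m(\{|g_k|>t\})$ to show the tail $L^{p^*}$-norm vanishes uniformly. The only cosmetic difference is that you pass through criterion (b) of Definition~\ref{d:equi-int} before deriving (a), while the paper verifies (a) directly via the triangle inequality $\|g_k\|_{L^{p^*}(E)}\leq \|g_k-g_k^{j}\|_{L^{p^*}(E)}+\|g_k^{j}\|_{L^{p^*}(\Omega)}$.
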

\begin{proof}
Since $(g_k)$ is bounded in $W^{1,p}(\Om)$, Chebychev's inequality implies 
\begin{equation}\label{e:suplevel1}
\sup_jj^p\cL^m(\{|g_k|>j\})\leq C<+\infty.
\end{equation}
For every fixed $j\in\N{}$, consider the sequence $g_k^j:=g_k-(g_k\vee(-j)\wedge j)$.
Then, $(g_k^j)\subset W^{1,p}(\Om)$ and
$\nabla g_k^j=\nabla g_k$ in $\{|g_k|>j\}$ and $\nabla g_k^j=0$ otherwise.
The Sobolev embedding yields
\begin{equation}\label{e:suplevel2}
\|g_k^j\|_{L^{p^*}(\Omega)}^p\leq c\|g_k^j\|_{W^{1,p}(\Omega)}^p\leq 
c\int_{\{|g_k|>j\}}\big(|g_k|^p+|\nabla g_k|^p\big)dx.
\end{equation}
Therefore, the equi-integrability assumptions and \eqref{e:suplevel1} imply that for every $\eps>0$ there exists $j_\eps\in\N{}$ 
such that for every $j\geq j_\eps$
\begin{equation}\label{e:norma}
\sup_k\|g_k^j\|_{L^{p^*}(\Omega)}\leq\eps/2.
\end{equation}
Let $\delta>0$ and consider a generic $\cL^m$-measurable sets $E\subseteq\Om$ with $\cL^m(E)\leq\delta$.
Then, since we have 
$$
\|g_k\|_{L^{p^*}(E)}\leq\|g_k-g_k^{j_\eps}\|_{L^{p^*}(E)}+\|g_k^{j_\eps}\|_{L^{p^*}(E)}\leq
{j_\eps}\,(\cL^m(E))^{1/p^*}+\|g_k^{j_\eps}\|_{L^{p^*}(\Omega)},
$$
by \eqref{e:norma}, to conclude it suffices to choose $\delta$ such that $j_\eps\delta^{1/p^*}\leq\eps/2$.
\end{proof}

\bibliographystyle{plain}
\bibliography{reference-semi}
\end{document}